\documentclass[a4paper,11pt]{amsart}

\tolerance=1000
\hbadness=8000
\hfuzz=15.00pt
\vbadness=10000
\vfuzz=5.00pt

\usepackage{amsfonts,latexsym,rawfonts,amsmath,amssymb,amsthm, mathrsfs, lscape}
\usepackage[english]{babel}
\usepackage{setspace}
\setstretch{1.01}
\usepackage{textcomp}
\usepackage{tikz}
\usepackage{xy}
\usepackage{graphicx}
\usepackage[inline]{enumitem}
\usepackage{fullpage}
\usepackage{xfrac}

\usepackage[hypertexnames=false,
backref=page,
    pdftex,
    pdfpagemode=UseNone,
    breaklinks=true,
    extension=pdf,
    colorlinks=true,
    linkcolor=blue,
    citecolor=blue,
    urlcolor=blue,
]{hyperref}

%
%

%
%

\renewcommand{\Re}{\mathsf{Re}\,}
\renewcommand{\Im}{\mathsf{Im}\,}

\newcommand{\referenza}{}

\newtheorem{thm}{Theorem}[section]
\newtheorem*{thm*}{Theorem \referenza}
\newtheorem{cor}[thm]{Corollary}
\newtheorem*{cor*}{Corollary \referenza}
\newtheorem{lem}[thm]{Lemma}
\newtheorem*{lem*}{Lemma \referenza}

\newtheorem*{prop*}{Proposition \referenza}

\newtheorem*{conj*}{Conjecture \referenza}

\theoremstyle{definition}
\newtheorem{rmk}[thm]{Remark}

\numberwithin{equation}{section}

\def \Q {\mathbb Q}
\def \R {\mathbb R}
\def \C {\mathbb C}
\def \Z {\mathbb Z}

\def\Xint#1{\mathchoice
{\XXint\displaystyle\textstyle{#1}}
{\XXint\textstyle\scriptstyle{#1}}
{\XXint\scriptstyle\scriptscriptstyle{#1}}
{\XXint\scriptscriptstyle\scriptscriptstyle{#1}}
\!\int}
\def\XXint#1#2#3{{\setbox0=\hbox{$#1{#2#3}{\int}$ }
\vcenter{\hbox{$#2#3$ }}\kern-.6\wd0}}

\def\dashint{\Xint-}

\allowdisplaybreaks[1]

\definecolor{DarkViolet}{rgb}{0.58,0.00,0.83}
\newcommand{\dan}[1]{{\color{black}{#1}}}

\newcommand{\rosso}[1]{{\color{black}{#1}}}

\title[Leafwise flat forms on Inoue-Bombieri surfaces]{Leafwise flat forms on Inoue-Bombieri surfaces}

 \author{Daniele Angella}
 \address[Daniele Angella]{
  Dipartimento di Matematica e Informatica ``Ulisse Dini''\\
  Universit\`a degli Studi di Firenze\\
  viale Morgagni 67/a\\
  50134 Firenze, Italy
 }
 \email{daniele.angella@gmail.com}
 \email{daniele.angella@unifi.it}

 \author{Valentino Tosatti}
 \address[Valentino Tosatti]{
 Courant Institute of Mathematical Sciences\\
 New York University\\
 251 Mercer St\\
  New York, NY 10012, USA
 }
 \email{tosatti@cims.nyu.edu}

\makeatletter
\@namedef{subjclassname@2020}{
  \textup{2020} Mathematics Subject Classification}
\makeatother

\keywords{Chern-Ricci flow, Inoue-Bombieri surface, leafwise flat form, Gauduchon metric}
\thanks{During the preparation of the work, the first-named author has been supported by Project FIRB ``Geometria Differenziale e Teoria Geometrica delle Funzioni'', by Project SIR2014 ``Analytic aspects in complex and hypercomplex geometry'' (code RBSI14DYEB), by Project PRIN ``Variet\`a reali e complesse: geometria, topologia e analisi armonica'' and PRIN2017 ``Real and Complex Manifolds: Topology, Geometry and holomorphic dynamics'' (code 2017JZ2SW5), and by GNSAGA of INdAM. The second-named author was partially supported by NSF grant DMS-1610278, DMS-1903147, DMS-2231783 and by a Chaire Poincar\'e at Institut Henri Poincar\'e}
\subjclass[2020]{53E30, 32J15, 53C55}

\date{\today}

\begin{document}

\begin{abstract}
We prove that every Gauduchon metric on an Inoue-Bombieri surface admits a strongly leafwise flat form in its $\partial\overline\partial$-class.
Using this result, we deduce uniform convergence of the normalized Chern-Ricci flow starting at any Gauduchon metric on all Inoue-Bombieri surfaces.
We also show that the convergence is smooth with bounded curvature for initial metrics in the $\partial\overline\partial$-class of the Tricerri/Vaisman metric.
\end{abstract}

\maketitle

\section{Introduction}
In this paper we are interested in the convergence of the normalized Chern-Ricci flow on Inoue-Bombieri surfaces. The {\em Chern-Ricci flow} is a parabolic evolution equation for Hermitian metrics (with associated $(1,1)$-forms) $\omega(t)$ on a compact complex manifold, given by
\begin{equation}\label{eq:CRF0}\tag{CRF}
 \frac{\partial}{\partial t}\omega(t) = -\mathrm{Ric}^{Ch}(\omega(t)), \qquad \omega(0) = \omega_0 ,
\end{equation}
where $\mathrm{Ric}^{Ch}(\omega)\stackrel{\text{loc}}{=}-\sqrt{-1}\,\partial\overline\partial\log\det \omega$ denotes the Chern-Ricci form of $\omega$, and $\omega_0$ is any initial Hermitian metric. It was first studied by M. Gill in the setting of manifolds with vanishing first Bott-Chern class \cite{gill}, where a parabolic proof of the non-K\"ahler Calabi-Yau Theorem \cite{tosatti-weinkove-jams} was given, and then introduced and studied in general by B. Weinkove and the second-named author \cite{tosatti-weinkove-jdg}.
If the initial metric is K\"ahler, then the Chern-Ricci flow equals the K\"ahler-Ricci flow, but in general it is quite different from the Ricci flow.
The behavior of the Chern-Ricci flow on compact complex surfaces was investigated in \cite{tosatti-weinkove-comp, tosatti-weinkove-mathann, fang-tosatti-weinkove-zheng, kawa, nie2, to, E, AS}, and further general results in all dimensions can be found in \cite{gill2, GS,KN,lauret, lauret-rodriguezvalencia, nguyen, nie, SW,yang,zheng}, see also \cite{leetam,HLT} for the Chern-Ricci flow on noncompact complex manifolds, and \cite{tosatti-weinkove-survey} for a survey.
From all these works it is clear that the behavior of solutions of the Chern-Ricci flow deeply reflects the underlying complex structure. Understanding the behavior of the Chern-Ricci flow on non-K\"ahler compact complex surfaces is particularly interesting, due to the fact that such surfaces are not completely classified.

Recall (see {\itshape e.g.} \cite{bhpv}) that minimal non-K\"ahler compact complex surfaces can be divided into three classes according to their Kodaira dimension, namely minimal non-K\"ahler properly elliptic surfaces ($\mathrm{Kod}=1$), Kodaira surfaces ($\mathrm{Kod}=0$), and minimal surfaces of class VII ($\mathrm{Kod}=-\infty$). The behavior of the Chern-Ricci flow on minimal non-K\"ahler properly elliptic surfaces is described in general in \cite{tosatti-weinkove-mathann}, while Kodaira surfaces are covered by \cite{gill}. Minimal class VII surfaces $S$ with $b_2(S)=0$ are also classified \cite{bogomolov-1976, bogomolov, li-yau-zheng0, li-yau-zheng, teleman}, and they are either Hopf or Inoue-Bombieri surfaces, while those with $b_2(S)>0$ are not classified in general, see {\itshape e.g.} \cite{kato, dloussky-oeljeklaus-toma,Te1,Te2}. The Chern-Ricci flow on Hopf surfaces has been studied in \cite{tosatti-weinkove-jdg,tosatti-weinkove-comp,E}, and in this paper we focus on Inoue-Bombieri surfaces.

More precisely, we will consider the {\em normalized Chern-Ricci flow} starting at a Hermitian metric $\omega_0$:
\begin{equation}\label{eq:CRF}\tag{NCRF}
 \frac{\partial}{\partial t}\omega(t) = -\mathrm{Ric}^{Ch}(\omega(t))-\omega(t) , \qquad \omega(0) = \omega_0 ,
\end{equation}
where the underlying manifold will be an Inoue-Bombieri surface.

{\em Inoue-Bombieri surfaces} \cite{Bom,inoue} are surfaces of class VII with second Betti number equal to zero and with no holomorphic curves \cite{bogomolov-1976, bogomolov, li-yau-zheng0, li-yau-zheng, teleman}. Their universal cover is $\C\times\mathbb H$, where $\mathbb H$ denotes the upper half-plane.
They are divided into three families: $S_M$, $S^+_{N,p,q,r;\mathbf{t}}$, and $S^-_{N,p,q,r}$.
They have a structure of fibre bundle over $\mathbb S^1$, where the fibre is a $3$-dimensional torus in case $S_M$, and a compact quotient of the $3$-dimensional Heisenberg group in case $S^\pm$. Furthermore, every Inoue-Bombieri surface of type $S^-$ has an unramified double cover of type $S^+$.

On any Inoue-Bombieri surface $S$, the standard metric on $\mathbb H$ (with coordinate $w=x_2+\sqrt{-1}\,y_2$) induces the degenerate metric \cite{harvey-lawson}
\begin{equation}\label{eq:alpha}
\alpha := \frac{\sqrt{-1}}{4y_2^2}dw\wedge d\bar w ,
\end{equation}
which satisfies
$$ 0 \leq \lambda \alpha \in -c_1^{BC}(S) , $$
where $\lambda=1$ when $S$ is of type $S_M$ and $\lambda=2$ when $S$ is of type $S^{\pm}$. For convenience, let us define
\begin{equation}\label{eq:alpha2}
\omega_\infty := \lambda\alpha .
\end{equation}
Moreover, the kernel of $\alpha$ defines a holomorphic foliation on $S$ by parabolic Riemann surfaces, whose leaves are dense in the fibres of the bundle structure.

It follows from \cite[Theorem 1.2]{tosatti-weinkove-jdg} (cf. \cite[Theorem 2.1]{tosatti-weinkove-mathann}) that the Chern-Ricci flow starting at any Hermitian metric on an Inoue-Bombieri surface has a unique solution for all positive time.
In \cite[\S5, \S6, \S7]{tosatti-weinkove-comp}, explicit solutions of the normalized Chern-Ricci flow starting at the Tricerri, respectively Vaisman-Tricerri metric on an Inoue-Bombieri surface $S_M$, respectively $S^\pm$, were shown to converge to $\mathbb S^1$ in the sense of Gromov-Hausdorff, \cite[Theorem 5.1, Theorem 6.1, Theorem 7.1]{tosatti-weinkove-comp}. More generally, this holds for all initial \dan{locally} homogeneous metrics, \cite{lauret, lauret-rodriguezvalencia}.

This convergence result is extended to a larger class of initial metrics in \cite{fang-tosatti-weinkove-zheng}, where S. Fang, the second-named author, B. Weinkove, and T. Zheng proved that the normalized Chern-Ricci flow collapses any Hermitian metric on an Inoue-Bombieri surface to a circle, modulo an initial conformal change.
In fact, they proved that on an Inoue-Bombieri surface the solution of the normalized Chern-Ricci flow starting at any Hermitian metric in the $\partial\overline\partial$-class of a $(1,1)$-form which is {\em strongly flat along the leaves} converges uniformly to $\omega_\infty$ as $t\to+\infty$, \cite[Theorem 1.1]{fang-tosatti-weinkove-zheng}, and that this implies that the Gromov-Hausdorff limit is a circle.
The convergence is in fact in $\mathcal{C}^\beta$, for every $0<\beta<1$, when the initial metric is in the $\partial\overline\partial$-class of the Tricerri, respectively Vaisman-Tricerri metric, \cite[Theorem 1.3]{fang-tosatti-weinkove-zheng}.
Moreover, any Hermitian metric on an Inoue-Bombieri surface admits a Hermitian metric in its conformal class which is strongly flat along the leaves \cite{fang-tosatti-weinkove-zheng}.

Here, a real $(1,1)$-form $\omega$ (not necessarily a Hermitian metric) on an Inoue-Bombieri surface $S$ is called {\em flat along the leaves} if the restriction of $\omega$ to every leaf of the holomorphic foliation of $S$ is a flat K\"ahler metric on $\C$ in case $S_M$, respectively $\C^*$ in case $S^\pm$. Equivalently, consider the universal cover $P\colon \C\times\mathbb H \to S$. Then $\omega$ is flat along the leaves if and only if $P^*\omega\lfloor_{\C\times\{w\}}$ is a flat K\"ahler metric for any $w\in\mathbb H$. This is equivalent to asking that $\alpha\wedge\omega=\pi^*\eta\,\omega_{TV}^2$, for some $\eta\in\mathcal{C}^\infty(\mathbb S^1;\R^{>0})$, where $\pi\colon S_M\to \mathbb S^1$ denotes the projection of the bundle structure, \cite[Lemma 2.1]{fang-tosatti-weinkove-zheng} and $\omega_{TV}$ denotes the Tricerri metric \cite{tricerri} in the case $S_M$, respectively the Vaisman-Tricerri metric \cite{vaisman,tricerri} in the case $S^\pm$.
If moreover $P^*\omega\lfloor_{\C\times\{w\}}$ is equal to $c(\Im w)\cdot\sqrt{-1}\,dz\wedge d\bar z$ on $S_M$, respectively $c\cdot \sqrt{-1}\, dz\wedge d\bar z$ on $S^\pm$, (here $z$ is the coordinate on $\C$), then $\omega$ is called {\em strongly flat along the leaves}.
This is equivalent to asking that $\alpha\wedge\omega=c\,\omega_{TV}^2$, where $c>0$ is a constant, \cite[Lemma 2.1]{fang-tosatti-weinkove-zheng}.

The results in \cite{fang-tosatti-weinkove-zheng} left open the question of the behavior of the Chern-Ricci flow on Inoue-Bombieri surfaces when the initial Hermitian metric is arbitrary. More precisely in Question 1 in \cite[\S4]{fang-tosatti-weinkove-zheng} they asked whether all Hermitian metrics belong to the $\partial\overline\partial$-class of a $(1,1)$-form which is strongly flat along the leaves, or whether this holds at least for Gauduchon metrics. In this paper we answer these questions.

Our first observation, see Lemma \ref{lem:ostruction-slf}, is that it is in fact not true that all Hermitian metrics on an Inoue-Bombieri surface belong to the $\partial\overline\partial$-class of a $(1,1)$-form which is strongly flat along the leaves. This follows from a simple obstruction (see \eqref{obstrukt} below) coming from elements in the kernel of the leafwise Laplacian. We also observe that this obstruction vanishes for all Gauduchon metrics.

More interestingly, our main Theorems \ref{main2} and \ref{main3} show that if this obstruction vanishes (in particular this holds for all Gauduchon metrics) then the Hermitian metric does belong to the $\partial\overline\partial$-class of a $(1,1)$-form which is strongly flat along the leaves. This gives:

\begin{thm}\label{thm:main-thm}
 Let $S$ be an Inoue-Bombieri surface. Let $\omega$ be a Gauduchon metric on $S$ (or more generally a Hermitian metric which satisfies \eqref{obstrukt}). Then there exists a smooth function $u$ on $S$ such that $\omega+\sqrt{-1}\,\partial\overline\partial u$ is a real $\partial\overline{\partial}$-closed $(1,1)$-form which is strongly flat along the leaves.
\end{thm}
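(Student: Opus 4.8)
The plan is to reduce the statement to a scalar PDE on the circle $\mathbb S^1$ and solve it. By \cite[Lemma 2.1]{fang-tosatti-weinkove-zheng}, the conclusion ``$\omega+\sqrt{-1}\,\partial\overline\partial u$ is strongly flat along the leaves'' is equivalent to the single equation $\alpha\wedge(\omega+\sqrt{-1}\,\partial\overline\partial u)=c\,\omega_{TV}^2$ for some constant $c>0$, while the weaker ``flat along the leaves'' amounts to $\alpha\wedge(\omega+\sqrt{-1}\,\partial\overline\partial u)=\pi^*\eta\,\omega_{TV}^2$ for some positive function $\eta$ on $\mathbb S^1$. Since $\alpha\wedge\sqrt{-1}\,\partial\overline\partial u$ only sees the leafwise part of the Hessian and since $\alpha\wedge\sqrt{-1}\,\partial\overline\partial(\,\cdot\,)$ applied to the pullback of a function from $\mathbb S^1$ vanishes, the natural strategy is a two-step procedure: first absorb the ``leaf-by-leaf'' fluctuation of $\alpha\wedge\omega$ to reach a form that is merely flat along the leaves, and then, working on the base $\mathbb S^1$, fix the remaining $1$-variable problem to upgrade ``flat'' to ``strongly flat'' (i.e.\ make $\eta$ constant). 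The vanishing of the obstruction \eqref{obstrukt}, which by assumption holds (in particular for Gauduchon metrics), is precisely what makes the first step solvable.

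For the first step I would write, in the universal cover coordinates $(z,w)$, the quantity $\alpha\wedge\omega=f(z,w)\,\omega_{TV}^2$ for a positive function $f$ descending to $S$, and seek $u$ so that $\alpha\wedge\sqrt{-1}\,\partial\overline\partial u$ cancels the non-constant-in-$z$ part of $f$. Restricted to a leaf $\C\times\{w\}$ (resp.\ $\C^*\times\{w\}$), the operator $u\mapsto \alpha\wedge\sqrt{-1}\,\partial\overline\partial u$ is, up to the conformal factor, the flat leafwise Laplacian $\Delta_{TV}u$; so what is needed is that the average of $f$ over each leaf, minus $f$ itself, lies in the image of the leafwise Laplacian on $S$. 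The leaves are dense in the torus fibres (resp.\ Heisenberg-quotient fibres), so the relevant Laplacian is the one on these $3$-manifolds along the $2$-dimensional foliation; its cokernel is detected exactly by the leafwise-harmonic functions, and the statement that $f$ minus its leafwise average is $L^2$-orthogonal to these is equivalent to the vanishing of \eqref{obstrukt}. Here I expect to invoke a Fredholm/Hodge-theoretic argument along the leaves — this is the technical heart and the main obstacle, since the foliation has dense leaves and one must control the leafwise Laplacian on the compact $3$-manifold (uniform ellipticity transverse to the flow, elliptic estimates, and the identification of its kernel with the pullbacks from $\mathbb S^1$). Presumably this is what Theorems \ref{main2} and \ref{main3} establish, and I would cite them for this step.

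After the first step we have a smooth $u_1$ with $\alpha\wedge(\omega+\sqrt{-1}\,\partial\overline\partial u_1)=\pi^*\eta\,\omega_{TV}^2$ for some $\eta\in\mathcal C^\infty(\mathbb S^1;\R^{>0})$. For the second step I look for a correction $u_2=\pi^*\varphi$ pulled back from the base. On $S_M$ the leafwise metric has shape $c(y_2)\,\sqrt{-1}\,dz\wedge d\bar z$ with $y_2=\Im w$ a coordinate proportional to the base coordinate on $\mathbb S^1$, and $\sqrt{-1}\,\partial\overline\partial(\pi^*\varphi)$ contributes along the leaves through the $w$-derivatives of $\varphi$; on $S^\pm$ the analogous computation in Heisenberg coordinates produces a first-order ODE in the base variable. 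In either case one gets an ordinary differential equation of the form (schematically) $\eta+(\text{first-order linear operator in }s)\varphi=c$ on the circle, whose solvability for a suitable constant $c>0$ and a periodic $\varphi$ is routine: integrate, choose $c$ to kill the period obstruction, and note the resulting $c$ is positive because $\eta>0$. Finally set $u=u_1+\pi^*\varphi$; then $\omega+\sqrt{-1}\,\partial\overline\partial u$ is real, $\partial\overline\partial$-closed (immediate, being $\omega$ plus an exact-type correction), and satisfies $\alpha\wedge(\omega+\sqrt{-1}\,\partial\overline\partial u)=c\,\omega_{TV}^2$, i.e.\ it is strongly flat along the leaves, which is the claim. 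The only genuinely hard input is the leafwise linear-PDE solvability in the first step; everything else is either a citation to \cite{fang-tosatti-weinkove-zheng} or a one-variable computation.
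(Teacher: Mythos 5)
Your reduction to the degenerate equation $\Delta_{\mathcal D}u=G(\omega)$ is the right opening move, and your identification of $\ker\Delta_{\mathcal D}$ with pullbacks from $\mathbb S^1$ is correct. But the proposal fails at both of its two steps. The second step is simply impossible: you correctly observe near the start that $\alpha\wedge\sqrt{-1}\,\partial\overline\partial(\pi^*\varphi)=0$ when $\varphi$ is pulled back from the base (such $\varphi$ are precisely $\ker\Delta_{\mathcal D}$), yet two paragraphs later you propose $u_2=\pi^*\varphi$ and solve a ``first-order ODE on $\mathbb S^1$'' to make $\eta$ constant. Since $\sqrt{-1}\,\partial\overline\partial(\pi^*\varphi)$ involves only $dw\wedge d\bar w$ and $\alpha$ is itself proportional to $dw\wedge d\bar w$, adding such a correction leaves $\alpha\wedge\bigl(\omega+\sqrt{-1}\,\partial\overline\partial u\bigr)$ unchanged; there is no ODE to solve and no way to adjust $\eta$ by a $\partial\overline\partial$-exact term from the base. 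What actually happens is that the obstruction \eqref{obstrukt} already \emph{forces} $\eta$ to be the correct constant: by Lemma~\ref{lem:obstr2} the fiber integrals $\int_{\pi^{-1}(y_2)}G(\omega)\,\mathrm{vol}$ all vanish, which is exactly the statement that the fiberwise average of $\frac{\omega\wedge\alpha}{\omega_{TV}^2}$ equals the constant $\frac{\int_S\omega\wedge\alpha}{\int_S\omega_{TV}^2}$ independently of $y_2$. So if you could carry out your ``step one'' you would already be done; your hypothesis is what makes the second step both unnecessary and unavailable.

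The more fundamental gap is that your ``step one'' — solving the leafwise equation — is dealt with by citing Theorems~\ref{main2} and~\ref{main3}, but those \emph{are} the content of the theorem being proved (the paper states Theorem~\ref{thm:main-thm} exactly as their combination), so the argument is circular. The appeal to a Fredholm or Hodge-theoretic argument ``along the leaves'' does not repair this: $\Delta_{\mathcal D}$ restricted to a fiber is $\partial_{z\bar z}$ on a compact $3$-manifold, which is degenerate elliptic (no transverse ellipticity), and because the leaves are dense one cannot solve leaf by leaf. What the paper actually does is expand in Fourier series on the fibers — the full torus expansion on $\mathbb{T}^3$ for $S_M$, and a partial expansion in $x_1$ followed by a further expansion of the zero mode for the Heisenberg fibers of $S^+$. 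The symbol of $\Delta_{\mathcal D}$ vanishes along a one-dimensional line of frequencies whose slope is an irrational \emph{algebraic} number (coming from the eigenvector data of the defining integer matrix), so it never vanishes on nonzero integer frequencies, but it does become arbitrarily small; the polynomial lower bound on the symbol, hence the smoothness of the distributional solution, is obtained from Liouville's theorem on Diophantine approximation of algebraic irrationals, in the spirit of Greenfield--Wallach \cite{GW}. This Diophantine/small-denominator analysis is the real technical heart of the proof, and it is entirely missing from your proposal.
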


This theorem reduces to solving the degenerate elliptic equation
\begin{equation}\tag{\ref{eq:Deltau=G}}
 \Delta_{\mathcal D}u = G(\omega) ,
\end{equation}
where
$$ \Delta_{\mathcal{D}}u := \frac{\sqrt{-1}\,\partial\overline\partial u\wedge\alpha}{\omega_{TV}^2} $$
is the Laplacian along the leaves,
and where we set
$$ G(\omega) := -\frac{\omega\wedge\alpha}{\omega_{TV}^2}+\frac{\int_{S}\omega\wedge\alpha}{\int_{S}\omega_{TV}^2} . $$

A necessary condition for the solvability of \eqref{eq:Deltau=G} is given by
\begin{equation}\label{obstrukt}
 G(\omega) \perp_{L^2(\omega_{TV}^2)} \ker\Delta_{\mathcal{D}} ,
\end{equation}
and this is satisfied by Gauduchon metrics $\omega$, Lemma \ref{lem:ostruction-slf}. Inoue-Bombieri surfaces are bundles over the circle with fiber $\mathbb{T}^3$ in the case of $S_M$ and a $3$-dimensional nilmanifold in the case of $S^{\pm}$. Using Fourier expansion along these fibers (partial Fourier expansion in the case of nilmanifolds), we obtain a distributional solution to \eqref{eq:Deltau=G} whenever \eqref{obstrukt} is satisfied, and we show that this solution is in fact smooth using crucially the Liouville theorem on rational approximations of irrational algebraic numbers.

As a consequence of Theorem \ref{thm:main-thm}, using more or less directly \cite[Theorem 1.1]{fang-tosatti-weinkove-zheng}, we get uniform convergence for the normalized Chern-Ricci flow starting at any Gauduchon metric on all Inoue-Bombieri surfaces, thus answering a question in \cite[Conjecture 1, page 3183]{fang-tosatti-weinkove-zheng}.
\begin{cor}\label{kor}
 Let $S$ be an Inoue-Bombieri surface, and $\omega$ be any Gauduchon metric on $S$ (or more generally a Hermitian metric satisfying \eqref{obstrukt}). Let $\omega(t)$ be the solution of the normalized Chern-Ricci flow \eqref{eq:CRF} starting at $\omega$. Then
 $$ \omega(t) \to \omega_\infty \qquad \text{ as } t\to+\infty , $$
uniformly on $S$ and exponentially fast, where $\omega_\infty$ is defined in \eqref{eq:alpha2}.
 Moreover,
 $$ \left( S, \omega(t) \right) \to \left( \mathbb{S}^1, d \right) \qquad \text{ as } t\to+\infty $$
 in the Gromov-Hausdorff sense, where $d$ is the standard metric on $\mathbb{S}^1$, of radius depending on $S$.
\end{cor}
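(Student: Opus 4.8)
The plan is to obtain Corollary \ref{kor} as an essentially immediate consequence of Theorem \ref{thm:main-thm} together with \cite[Theorem 1.1]{fang-tosatti-weinkove-zheng}. First recall, as noted in the introduction via \cite[Theorem 1.2]{tosatti-weinkove-jdg}, that the normalized Chern-Ricci flow \eqref{eq:CRF} starting at the given Hermitian metric $\omega$ admits a unique solution $\omega(t)$ for all $t\in[0,+\infty)$, so no existence issue arises and it only remains to identify the limit and its rate of convergence. If $\omega$ is Gauduchon, then it satisfies the obstruction condition \eqref{obstrukt} by Lemma \ref{lem:ostruction-slf}, while if $\omega$ is merely assumed to satisfy \eqref{obstrukt} there is nothing to verify; in either case the hypothesis of Theorem \ref{thm:main-thm} holds.

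Applying Theorem \ref{thm:main-thm} yields a smooth function $u$ on $S$ such that $\widetilde\omega:=\omega+\sqrt{-1}\,\partial\overline\partial u$ is a real $\partial\overline\partial$-closed $(1,1)$-form which is strongly flat along the leaves. Hence $\omega$ is a Hermitian metric lying in the $\partial\overline\partial$-class of the strongly leafwise flat form $\widetilde\omega$. It is important here that \cite[Theorem 1.1]{fang-tosatti-weinkove-zheng} only requires the existence of \emph{some} $(1,1)$-form (not necessarily positive, hence not necessarily a Hermitian metric) in the $\partial\overline\partial$-class of the initial datum that is strongly flat along the leaves: this is exactly what Theorem \ref{thm:main-thm} supplies, consistently with the observation (Lemma \ref{lem:ostruction-slf}) that one cannot in general expect $\widetilde\omega$ itself to be positive.

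We now invoke \cite[Theorem 1.1]{fang-tosatti-weinkove-zheng} with initial metric $\omega$: since $\omega$ lies in the $\partial\overline\partial$-class of the strongly leafwise flat form $\widetilde\omega$, the solution $\omega(t)$ converges uniformly on $S$, and exponentially fast, to $\omega_\infty=\lambda\alpha$ (see \eqref{eq:alpha2}) as $t\to+\infty$, and consequently $(S,\omega(t))$ converges to $(\mathbb{S}^1,d)$ in the Gromov-Hausdorff sense, with $d$ the standard metric on a circle whose radius depends only on $S$; both the exponential rate and the Gromov-Hausdorff statement are already part of, or immediate from, the conclusion of \cite[Theorem 1.1]{fang-tosatti-weinkove-zheng}. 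This gives Corollary \ref{kor}. I do not expect any genuine obstacle at this stage: the entire difficulty of the argument is concentrated in Theorem \ref{thm:main-thm}, that is, in solving the degenerate elliptic equation \eqref{eq:Deltau=G} on the torus or nilmanifold fibers via (partial) Fourier expansion and then promoting the resulting distributional solution to a smooth one using the Liouville bound on Diophantine approximations of real algebraic numbers; granting that, the present deduction is pure bookkeeping.
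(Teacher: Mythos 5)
Your overall strategy is exactly the paper's: feed Theorem \ref{thm:main-thm} into \cite[Theorem 1.1]{fang-tosatti-weinkove-zheng}, and note that global long-time existence is already guaranteed by \cite[Theorem 1.2]{tosatti-weinkove-jdg}. You also correctly flag that the strongly leafwise flat form $\widetilde\omega=\omega+\sqrt{-1}\,\partial\overline\partial u$ produced by Theorem \ref{thm:main-thm} need not be positive definite, and that this must be reconciled with the hypotheses of \cite[Theorem 1.1]{fang-tosatti-weinkove-zheng}.

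However, there is a gap in how you resolve that point. You assert that \cite[Theorem 1.1]{fang-tosatti-weinkove-zheng} ``only requires the existence of \emph{some} $(1,1)$-form (not necessarily positive\dots)'' in the $\partial\overline\partial$-class. That is not what the cited theorem says: as the paper itself emphasizes, \cite[Theorem 1.1]{fang-tosatti-weinkove-zheng} is stated for initial metrics of the form $\widetilde\omega+\sqrt{-1}\,\partial\overline\partial v$ with $\widetilde\omega$ a \emph{Hermitian metric} that is strongly flat along the leaves. So you cannot simply apply it as a black box once $\widetilde\omega$ is allowed to be non-positive; some argument is required to extend it. The paper supplies exactly this missing step: one inspects the proof of \cite[Theorem 1.1]{fang-tosatti-weinkove-zheng}, whose reference metrics are $\tilde\omega(t)=(1-e^{-t})\omega_\infty+e^{-t}\widetilde\omega$ (their Equation (2.7)), and checks by Cauchy-Schwarz (to control the $dz\wedge d\overline w$ cross terms against the $\alpha$ part, which dominates as $t\to\infty$) that $\tilde\omega(t)$ is a Hermitian metric for all $t$ sufficiently large. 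Since only the large-$t$ behavior is relevant, the rest of the FTWZ argument then goes through verbatim. Without this observation, your deduction is not complete: the positivity of $\widetilde\omega$ is genuinely used in the statement you are citing, and reconciling its absence is the only real content of the corollary's proof.
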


\bigskip

In \cite[Conjecture 2, page 3183]{fang-tosatti-weinkove-zheng} it was also conjectured that in the setting of Corollary \ref{kor}, the metrics $\omega(t)$ converge to $\omega_\infty$ smoothly, and it was suggested that this could be first approached in the case of Gauduchon metrics in the $\partial\overline\partial$-class of $\omega_{TV}$. Our final theorem confirms the conjecture in this case, and also shows that the evolving metrics $\omega(t)$ collapse to $\omega_\infty$ with uniformly bounded curvature. Our arguments also apply to the setting of non-K\"ahler minimal properly elliptic surfaces as studied in \cite{tosatti-weinkove-comp,tosatti-weinkove-mathann} with initial metrics in the $\partial\overline\partial$-class of the Vaisman metric \cite{vaisman}, which greatly improves the main result of \cite{kawa}.

\begin{thm}\label{thm:main-thm2}
 Let $S$ be an Inoue-Bombieri surface or a non-K\"ahler minimal properly elliptic surface, and let $\omega_{TV}$ be the Tricerri/Vaisman metric on $S$ from \cite{tricerri,vaisman}. Let $\omega$ be a Gauduchon metric on $S$ which is of the form $\omega=\omega_{TV}+\sqrt{-1}\,\partial\overline\partial \psi$ for some smooth function $\psi$.  Let $\omega(t)$ be the solution of the normalized Chern-Ricci flow \eqref{eq:CRF} starting at $\omega$. Then we have that $\omega(t)\to\omega_\infty$ in the $C^\infty$ topology, and furthermore
 $$\sup_S |\mathrm{Rm}(\omega(t))|_{\omega(t)}\leq C,$$
 for all $t\geq 0$.
\end{thm}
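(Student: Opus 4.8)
The plan is to reduce \eqref{eq:CRF} to a parabolic complex Monge--Amp\`ere equation with a \emph{collapsing} reference family, to establish a priori estimates uniform as $t\to+\infty$ by exploiting that this reference is flat along the leaves (resp.\ along the elliptic fibres) and transversally negatively curved, and finally to upgrade to smooth convergence and bounded curvature by trivialising the collapse through a time-dependent rescaling of the leaf direction. Since $\mathrm{Ric}^{Ch}(\omega_{TV})=-\omega_\infty$ (a direct computation on the universal cover; see also \cite{tosatti-weinkove-comp,tosatti-weinkove-mathann}), and since $\omega_{TV}$ is strongly flat along the leaves while $\omega_\infty^2=0$, so that $\hat\omega_t^2=\nu(t)\,\omega_{TV}^2$ with $\nu(t)$ independent of the point, the metric $\hat\omega_t:=e^{-t}\omega_{TV}+(1-e^{-t})\omega_\infty$ --- a genuine Hermitian metric for each finite $t$, degenerating to $\omega_\infty$ as $t\to+\infty$ --- satisfies $\mathrm{Ric}^{Ch}(\hat\omega_t)=-\omega_\infty$ and $\partial_t\hat\omega_t=\omega_\infty-\hat\omega_t=-\mathrm{Ric}^{Ch}(\hat\omega_t)-\hat\omega_t$, i.e.\ it is the explicit solution of \eqref{eq:CRF} starting at $\omega_{TV}$ found in \cite{tosatti-weinkove-comp,tosatti-weinkove-mathann}. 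Writing $\omega(t)=\hat\omega_t+\sqrt{-1}\,\partial\overline\partial\varphi(t)$, the flow \eqref{eq:CRF} is then equivalent to
\begin{equation}\label{plan-ma}
 \frac{\partial\varphi}{\partial t}=\log\frac{(\hat\omega_t+\sqrt{-1}\,\partial\overline\partial\varphi)^2}{\hat\omega_t^2}-\varphi,\qquad \varphi(0)=\psi,
\end{equation}
with the additive constant normalised so that $\varphi\equiv0$ (i.e.\ $\omega(t)=\hat\omega_t$) is the solution corresponding to $\psi\equiv0$.

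\textbf{Uniform estimates.} Evaluating \eqref{plan-ma} at spatial extrema of $\varphi$ and using $\hat\omega_t+\sqrt{-1}\,\partial\overline\partial\varphi>0$ gives at once $\|\varphi(t)\|_{C^0(S)}\le e^{-t}\|\psi\|_{C^0(S)}$; in particular $\varphi(t)\to0$ uniformly. The crucial estimate is the uniform equivalence
\begin{equation}\label{plan-equiv}
 \Lambda^{-1}\hat\omega_t\le\omega(t)\le\Lambda\hat\omega_t\qquad\text{on }S\times[0,+\infty),
\end{equation}
with $\Lambda$ independent of $t$. The upper bound is a parabolic Schwarz-lemma estimate for $\log\mathrm{tr}_{\hat\omega_t}\omega$ (as in \cite{tosatti-weinkove-jdg,tosatti-weinkove-comp,tosatti-weinkove-mathann,fang-tosatti-weinkove-zheng}), for which it is essential that the family $\hat\omega_t$ has uniformly bounded curvature --- its restriction to each leaf/fibre is flat, and its transverse part is a bounded multiple of a fixed negatively curved metric. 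Granting the upper bound, the lower bound follows from the maximum principle applied to a suitable combination of $Q:=\log(\omega(t)^2/\hat\omega_t^2)$ and $\varphi$: one computes $(\partial_t-\Delta_{\omega(t)})Q=\mathrm{tr}_{\omega(t)}\omega_\infty-\mathrm{tr}_{\hat\omega_t}\omega_\infty$, and since $\mathrm{tr}_{\omega(t)}\omega_\infty\ge0$, $\mathrm{tr}_{\hat\omega_t}\omega_\infty\le C$, and $\mathrm{tr}_{\omega(t)}\hat\omega_t\ge2e^{-Q/2}$ by the arithmetic--geometric inequality on the complex surface $S$, the combination cannot become too negative, so $Q\ge-C$, which with the upper bound in \eqref{plan-equiv} gives the lower bound. (For the $\partial\overline\partial$-class of $\omega_{TV}$ the equivalence \eqref{plan-equiv} is also contained in \cite{fang-tosatti-weinkove-zheng}, respectively \cite{tosatti-weinkove-mathann,kawa}.)

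\textbf{Higher-order estimates via rescaling.} On the universal cover $P\colon\C\times\mathbb H\to S$ with coordinates $(z,w)$ one has $\hat\omega_t=\sqrt{-1}\big(c(t)\,b(w)\,dw\wedge d\bar w+e^{-t}a(w)\,dz\wedge d\bar z\big)$ with $a,b>0$ and $c(t)\in[\tfrac14,1]$. Introduce the time-dependent rescaling $\tilde z:=e^{-t/2}z$ of the leaf direction: in the coordinates $(\tilde z,w)$ the reference $\hat\omega_t$ has no collapsing direction and is uniformly equivalent, with all coordinate derivatives bounded uniformly in $t$, to the fixed complete metric $\omega_\flat:=\sqrt{-1}\big(\tfrac14 b(w)\,dw\wedge d\bar w+a(w)\,d\tilde z\wedge d\bar{\tilde z}\big)$; since the leaves are flat, rescaling them does not collapse them, so $\omega_\flat$ has bounded geometry and injectivity radius bounded below. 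Setting $\Phi(t,\tilde z,w):=\varphi(t,e^{t/2}\tilde z,w)$, invariant under the $t$-conjugated deck group, equation \eqref{plan-ma} becomes, in the $(\tilde z,w)$-coordinates,
\begin{equation}\label{plan-rescaled}
 \frac{\partial\Phi}{\partial t}-\tfrac12\big(\tilde z\,\partial_{\tilde z}+\bar{\tilde z}\,\partial_{\bar{\tilde z}}\big)\Phi=\log\frac{(\omega_\sharp(t)+\sqrt{-1}\,\partial\overline\partial\Phi)^2}{\omega_\sharp(t)^2}-\Phi,
\end{equation}
where $\omega_\sharp(t)$ is $\hat\omega_t$ in these coordinates, the first-order drift term has coefficients bounded on the bounded $\tilde z$-regions used below, the equation is uniformly parabolic by \eqref{plan-equiv}, and its $C^0$-datum is uniformly bounded by the previous step. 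Running the standard local (parabolic) interior estimates for complex Monge--Amp\`ere equations --- the second-order estimate being \eqref{plan-equiv}, followed by the parabolic Evans--Krylov and Schauder estimates, or the parabolic third-order Calabi estimate and bootstrapping, all on $\omega_\flat$-unit parabolic cylinders --- yields, for every $k$, a bound $\|\Phi(t)\|_{C^k(\omega_\flat)}\le C_k$ uniform in $t\ge1$ (short time being standard); equivalently, all $t$-rescaled covariant derivatives of $\omega(t)$ are bounded uniformly in $t$.

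\textbf{Conclusion and main obstacle.} The curvature norm $\sup_S|\mathrm{Rm}(\omega(t))|_{\omega(t)}$ is invariant under the biholomorphism $(z,w)\leftrightarrow(\tilde z,w)$, and in the $(\tilde z,w)$-picture $\omega(t)$ is uniformly equivalent to the bounded-geometry metric $\omega_\flat$ with uniformly $C^2(\omega_\flat)$-bounded coefficients, so $\sup_S|\mathrm{Rm}(\omega(t))|_{\omega(t)}\le C$ for all $t\ge0$. For the $C^\infty$ convergence, $\hat\omega_t\to\omega_\infty$ in $C^\infty$ exponentially fast, so it suffices that $\sqrt{-1}\,\partial\overline\partial\varphi(t)\to0$ in $C^\infty$ against a fixed background metric on $S$: the derivatives of $\varphi$ in the leaf direction carry positive powers of $e^{-t/2}$ under the rescaling and so decay by the higher-order bounds, while the remaining transverse derivatives, bounded by the same bounds, decay by interpolation with $\varphi=\Phi\to0$ in $C^0$; hence $\omega(t)=\hat\omega_t+\sqrt{-1}\,\partial\overline\partial\varphi(t)\to\omega_\infty$ in $C^\infty$. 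The non-K\"ahler minimal properly elliptic case is entirely parallel, with $\omega_{TV}$ the Vaisman metric, $\omega_\infty$ the pull-back of the K\"ahler--Einstein metric of the base curve (of genus $\ge2$), and the compact flat elliptic fibres in the role of the leaves; again the fibre rescaling trivialises the collapse without loss of injectivity radius. The main obstacle is the higher-order step: arranging the time-dependent rescaling so that local parabolic regularity applies with constants \emph{independent of $t$}. This rests on two features of the special geometry --- that the rescaled reference $\omega_\flat$ retains bounded geometry and, in particular, injectivity radius bounded below, which would fail for non-flat or positively curved fibres; and the uniform parabolicity of \eqref{plan-rescaled}, i.e.\ the lower bound in \eqref{plan-equiv} stating that the leaves collapse no faster than the rate $e^{-t}$, which is the one genuinely analytic input (supplied here by the maximum-principle argument for $Q$, or by \cite{fang-tosatti-weinkove-zheng}).
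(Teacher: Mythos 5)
Your proposal is correct and follows essentially the same approach as the paper: the key idea in both is the GTZ-style stretching of the leaf direction by $e^{t/2}$, after which the collapsing reference metrics on the universal cover become uniformly smoothly equivalent to a fixed Euclidean metric, so that local higher-order regularity for complex Monge--Amp\`ere type equations applies with $t$-independent constants, yielding the curvature bound and smooth convergence. The only technical difference is that you conjugate the whole flow by the time-dependent rescaling (which introduces the bounded first-order drift $\tfrac12(\tilde z\,\partial_{\tilde z}+\bar{\tilde z}\,\partial_{\bar{\tilde z}})$ into the scalar parabolic Monge--Amp\`ere equation), whereas the paper freezes the rescaling at each time $t$ and runs the flow of metrics $\lambda_t^*p^*\omega(s+t)$ over $s\in[-1,0]$ with no drift, appealing directly to the local Chern--Ricci flow estimates of \cite{SW}; the paper's closing Remark explicitly notes the scalar Monge--Amp\`ere variant you use as an equivalent route.
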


The main idea that we will use originates from the work of Gross-Tosatti-Zhang \cite{GTZ} on collapsing Calabi-Yau manifolds fibered by abelian varieties, with the role of the abelian varieties fibers now played by the leaves of the canonical foliation on $S$, and adapted to the K\"ahler-Ricci flow in \cite[\S 5.14]{To} (see also \cite{FZ,HT,TZ}). We apply a family of stretchings in the direction of the leaves to make the PDE uniformly elliptic, and using the explicit behavior of the Tricerri/Vaisman metric under this stretching we obtain higher order estimates for $\omega(t)$ after stretching from standard higher-order regularity of uniformly elliptic PDEs of complex Monge-Amp\`ere type.

\bigskip

The paper is organized as follows.
In Sections \ref{sec:inoue-sm} and \ref{sec:inoue-s+-}, we recall the construction of Inoue-Bombieri surfaces of type $S_M$, respectively $S^\pm$, and their main properties. In Section \ref{sec:gauduchon}, we study Gauduchon metrics on Inoue-Bombieri surfaces, and show that a certain natural obstruction for constructing strongly leafwise flat forms vanishes for such metrics (Lemmas \ref{lem:gaud-sm} and \ref{lem:gauduchon-s+-}). In Section \ref{sec:slf-metrics} we prove our main result, Theorem \ref{thm:main-thm} \dan{(see Theorems \ref{main2} and \ref{main3} respectively)}, by solving a degenerate elliptic equation on these surfaces, whenever the aforementioned obstruction vanishes, and we also deduce Corollary \ref{kor}. And lastly in Section \ref{sec:higher} we give the proof of Theorem \ref{thm:main-thm2}.

\bigskip

{\small
\noindent{\sl Acknowledgments.}
We are grateful to Gian Maria Dall'Ara, Serena Matucci, Fulvio Ricci, Ben Weinkove, and Steve Zelditch for very helpful discussions. The first-named author would like to thank for the warm hospitality during his stay at the Department of Mathematics of the Northwestern University, where this work was originally conceived.
This work was partially written during the first-named author's visit to the Facultad de Matem\'aticas of the Universidad Complutense de Madrid, and during the second-named author's visits to the Center for Mathematical Sciences and Applications at Harvard University and to the Institut Henri Poincar\'e in Paris (supported by a Chaire Poincar\'e), which we would like to thank for the hospitality.
}

\section{The Inoue-Bombieri surfaces \texorpdfstring{$S_M$}{SM}}\label{sec:inoue-sm}

\subsection{Construction}
Consider the Inoue-Bombieri surface \cite{inoue}
$$ S_M := \left. (\C\times\mathbb H) \middle\slash \Gamma \right. , $$
with coordinates $z=x_1+\sqrt{-1}y_1\in\C$ and $w=x_2+\sqrt{-1}y_2\in\mathbb H$, with $y_2>0$.

Here, we are given a matrix $M=(M_{jk})_{j,k}\in \mathrm{SL}(3;\Z)$ with a real eigenvalue $\lambda>1$ and complex non-real eigenvalues $\mu$ and $\bar\mu$. Note that $\lambda\in\R\setminus\Q$, (indeed, $\lambda\neq 1$ is a root of a monic polynomial with $0$th order term equal to $1$), and $\lambda|\mu|^2=1$. Denote by $(\ell_1,\ell_2,\ell_3)$ an eigenvector for $\lambda$, and by $(m_1,m_2,m_3)$ an eigenvector for $\mu$.
Since $\lambda$ is irrational, it follows immediately that at least two of the $\ell_j$'s must be nonzero, so at least one of the ratios $\{\ell_i/\ell_j\}_{i\neq j}$ is well-defined and is an irrational algebraic number.

Define $\Gamma=\left\langle f_0,f_1,f_2,f_3 \right\rangle$ to be the subgroup of automorphisms of $\C\times\mathbb H$ generated by
$$ f_0(z,w):=(\mu z,\lambda w),\qquad f_j(z,w):=(z+m_j,w+\ell_j), $$
varying $j\in\{1,2,3\}$.
The action of $\Gamma$ on $\C\times\mathbb H$ is fixed-point free and properly discontinuos with compact quotient, so $S_M$ is a compact complex manifold.
Denote by $P\colon \C\times\mathbb H \to S_M$ the projection.

\subsection{Torus-bundle structure}
Note that $S_M$ has a structure of $\mathbb T^3$-bundle over $\mathbb S^1$, with projection
$$ \pi \colon S_M \ni (z,w) \mapsto \Im w \in \left. \R^{>0} \middle\slash \left\langle y_2\mapsto \lambda y_2\right\rangle \simeq \mathbb{S}^1 \right.. $$

More precisely, notice that $\left\{ (m_1,m_2,m_3), \, (\overline m_1,\overline m_2,\overline m_3), \, (\ell_1,\ell_2,\ell_3) \right\}$ are $\C$-linearly independent, whence $$\left\{(\Re m_1, \Re m_2, \Re m_3), \, (\Im m_1,\Im m_2,\Im m_3), \, (\ell_1,\ell_2,\ell_3) \right\}$$ are $\R$-linearly independent.
Therefore the subgroup $\Gamma^\prime := \left\langle f_1,f_2,f_3 \right\rangle < \Gamma$ is isomorphic to $\Z^3$. Moreover it acts properly-discontinuosly and freely on $\C\times\mathbb H$, with quotient $\tilde X := \left. (\C\times\mathbb H) \middle\slash \Gamma^\prime \right. \simeq \mathbb T^3\times\R^{>0}$. The projection $\pi_1\colon \tilde X \to \R^{>0}$ is induced by $(z,w)\mapsto \Im w$.

Since $\mu\cdot m_j=\sum_{k=1}^{3} M_{jk}\cdot m_k$ and $\lambda\cdot \ell_j=\sum_{k=1}^{3} M_{jk}\cdot \ell_k$, with $M_{jk}\in\Z$, then $f_0$ descends to a map $f_0\colon \tilde X \to \tilde X$, and we have that $S_M = \tilde X \slash \langle f_0 \rangle$.

Note that $f_0$ maps the torus fibre $T_y:=\pi^{-1}(y)$ to the torus fibre $T_{\lambda\cdot y}:=\pi^{-1}(\lambda\cdot y)$. In particular, $f_0$ induces the diffeomorphism $\Psi\colon T_1 \stackrel{\simeq}{\to} T_{\lambda}$. We get that $S_M$ has a structure of mapping torus as follows:
$$ S_M \simeq \left. \left(  \mathbb T^3 \times [1,\lambda] \right) \middle\slash \left( (p,1) \sim ( \Psi(p),\lambda ) \right) \right. . $$

\subsection{Foliation}
The form $\alpha := \frac{\sqrt{-1}}{4y_2^2}dw\wedge d\bar w$ being closed, induces a foliation on $S_M$, whose leaves are dense in the torus fibre.

More precisely, the kernel of $\frac{\sqrt{-1}}{4y_2^2}dw\wedge d\bar w$ on $\C \times \mathbb H$ is the integrable distribution
$$ \mathcal{D} := \mathrm{span}_{\C} \left\{ \frac{\partial}{\partial z} \right\} ,$$
whose leaves are of the form
$$ \mathcal{L}_{w_0} := \left\{ (z,w_0) \in \C\times\mathbb H : z \in \C \right\} , w_0\in\mathbb H.$$
It induces a holomorphic foliation $\mathcal{D}$ on $S_M$, without singularities, and whose leaves $P(\mathcal{L}_{w_0})$ are biholomorphic to $\C$, and Inoue \cite{inoue} (see also \cite[Proposition V.19.1]{bhpv} for a simple proof) showed that under the projection $P \colon \C\times\mathbb H \to S_M$, for any $w_0\in\mathbb H$, the image $P(\mathcal{L}_{w_0})$ is dense in the torus fibre $T:=\pi^{-1}(\Im w_0)\subset S_M$.

\subsection{Solvmanifold structure}
The Inoue-Bombieri surface has a structure of solvmanifold with invariant complex structure, discovered in \cite{Wa}.

\dan{
We consider the holomorphic action of the solvable Lie group $(\mathbb C \times \mathbb R) \rtimes \mathbb R$ on $\mathbb C \times \mathbb H$ given by
\begin{eqnarray*}
\lefteqn{ \left(x_1+\sqrt{-1}y_1,x_2+\sqrt{-1}y_2\right) \stackrel{\left(\left(a+\sqrt{-1}b,s\right),t\right)}{\mapsto}}\\
&&
\left( (\mu^t x_1+a)+\sqrt{-1}(\mu^t y_1+b), (\lambda^t x_2+s)+\sqrt{-1}\lambda^t y_2 \right).
\end{eqnarray*}
We get an action of the lattice $\Gamma:=\mathbb Z^3 \rtimes_M \mathbb Z$ on $\mathbb C \times \mathbb H$, and then
$$ S_M = \mathbb C \times \mathbb H \slash \Gamma . $$
}

\dan{The forms on $\mathbb C\times\mathbb H$ defined by
$$
 e_1 := \frac{1}{\sqrt{y_2}}\, \frac{\partial}{\partial x_1} , \qquad
 e_2 := \frac{1}{\sqrt{y_2}}\, \frac{\partial}{\partial y_1} ,
\qquad
 e_3 := y_2\, \frac{\partial}{\partial x_2} , \qquad
 e_4 := y_2\, \frac{\partial}{\partial y_2} ,
$$
are invariant, and give a global frame on $S_M$. Its dual co-frame is
}
$$
 e^1 := \sqrt{y_2}\, d x_1 , \qquad
 e^2 := \sqrt{y_2}\, d y_1 ,
 \qquad
 e^3 := \frac{1}{y_2}\, d x_2 , \qquad
 e^4 := \frac{1}{y_2}\, d y_2 .
$$
The structure equations are
$$
 de^1 = - \frac{1}{2}e^{1}\wedge e^{4}, \qquad
 de^2 = - \frac{1}{2}e^{2}\wedge e^{4},
 \qquad
 de^3 = e^{3}\wedge e^{4}, \qquad
 de^4 = 0 .
$$

The complex structure on $S_M$ induced by the invariant complex structure on $\C\times\mathbb H$ is given by
$$ Je_1 := e_2 , \qquad Je_3 := e_4 . $$

So a global co-frame of $(1,0)$-forms on $S_M$ is given by
$$ \varphi^1 := e^1+\sqrt{-1}e^2 = \sqrt{y_2}\, dz , \qquad \varphi^2 := e^3+\sqrt{-1}e^4 = \frac{1}{y_2}\, dw , $$
with structure equations
$$ d\varphi^1 = \frac{\sqrt{-1}}{4}\varphi^1\wedge\varphi^2-\frac{\sqrt{-1}}{4}\varphi^1\wedge\bar\varphi^2 ,
\qquad d\varphi^2=\frac{\sqrt{-1}}{2}\varphi^2\wedge\bar\varphi^2 . $$

\subsection{Tricerri metric}
Consider the degenerate metric \cite{harvey-lawson}
\begin{equation}\tag{\ref{eq:alpha}}
\alpha := \frac{\sqrt{-1}}{4y_2^2}dw\wedge d\bar w = \frac{\sqrt{-1}}{4}\varphi^2\wedge\bar\varphi^2 ,
\end{equation}
namely, the pull-back of the Poincar\'e metric on $\mathbb H$.

Consider the Tricerri metric \cite{tricerri}
$$ \omega_T := 4\alpha+\sqrt{-1}\varphi^1\wedge\bar\varphi^1 = \sqrt{-1}\varphi^1\wedge\bar\varphi^1 + \sqrt{-1}\varphi^2\wedge\bar\varphi^2 .$$
Note that the Tricerri metric is an lcK metric on $S_M$, with Lee form
$$ \vartheta := \frac{1}{2}\varphi^2-\frac{1}{2}\bar\varphi^2 .$$
Notice that $S_M$ does not contain any curve, as proven in \cite{inoue}.
Indeed, since $b_2(S_M)=0$, the closed $(1,1)$-form $\alpha$ is exact. If there were a curve $C$, then $\int_C\alpha=0$. Since $\alpha\geq0$, then $TC\subseteq\mathcal{D}$, that is, $C$ is contained in a leaf of $\mathcal{D}$, which is dense in a torus $\mathbb T^3$ as mentioned above. This is impossible.

\section{The Inoue-Bombieri surfaces \texorpdfstring{$S^\pm$}{S+-}}\label{sec:inoue-s+-}

\subsection{Construction}
Consider the Inoue-Bombieri surface
$$ S^+ := S^+_{N,p,q,r,\mathbf{t}} := \left. (\C\times\mathbb H) \middle\slash \Gamma \right. , $$
with coordinates $z=x_1+\sqrt{-1}y_1\in\C$ and $w=x_2+\sqrt{-1}y_2\in\mathbb H$, with $y_2>0$.
Here, we choose $N=(n_j^k)_{j,k}\in \mathrm{SL}(2;\Z)$ with two real eigenvalues $\gamma$ and $\gamma^{-1}$ with $\gamma>1$ (it follows that $\gamma$ is irrational), and $p,q,r\in\Z$ such that $r\neq0$, and $\mathbf t\in\C$.
Let $(a_1,a_2)$ and $(b_1,b_2)$ be two real eigenvectors corresponding to the eigenvalues $\gamma$ and $\gamma^{-1}$.
Let $(c_1,c_2)\in\R^2$ be a solution of the linear equation
\begin{equation}\label{eq:eq-for-c}
(c_1,c_2) = (c_1,c_2) \cdot N^t + (e_1,e_2) + \frac{b_1\cdot a_2-b_2\cdot a_1}{r}\cdot (p,q) ,
\end{equation}
where, for $j\in\{1,2\}$,
$$ e_j :=
 \frac12\cdot n_{j1}\cdot (n_{j1}-1)\cdot a_1 \cdot b_1
 + \frac12\cdot n_{j2}\cdot (n_{j2}-1)\cdot a_2 \cdot b_2
 + n_{j1}\cdot n_{j2}\cdot b_{1}\cdot a_{2} .
$$
Whence define $\Gamma=\left\langle f_0,f_1,f_2,f_3 \right\rangle$ to be the subgroup of automorphisms of $\C\times\mathbb H$ generated by
$$ f_0(z,w):=(z+\mathbf t,\gamma\cdot w),\qquad f_j(z,w):=(z+b_j\cdot w+c_j,w+a_j), $$
varying $j\in\{1,2\}$, and
$$ f_3(z,w) := \left( z+\frac{b_1\cdot a_2-b_2\cdot a_1}{r}, w\right) .$$
The action of $\Gamma$ on $\C\times\mathbb H$ is fixed-point free and properly discontinuous with compact quotient, so $S^+$ is a compact complex manifold.

The Inoue-Bombieri surface $S^-$ is constructed similarly (see \cite{inoue}), but the important point for us is that it has an unramified double cover of type $S^+$.

\begin{rmk}\label{irraz}
For later use, let us remark here that neither the slope of $(a_1,a_2)$ nor the slope of $(b_1,b_2)$ (which are clearly algebraic numbers) can be rational numbers.
If this was true, then $(1, a_2/a_1)$ (or $(a_1/a_2,1)$), \dan{or} $(1,b_2/b_1)$ (or $(b_1/b_2,1)$) would be eigenvectors of $N=(n_{jk})_{j,k}$ with rational entries. This means that, for example, $n_{11}+n_{12}\frac{a_2}{a_1}=\gamma$ would be rational.
\end{rmk}

\subsection{Nilmanifold-bundle structure}
On $S^+$, as in the case of $S_M$, we consider the subgroup $\Gamma^\prime := \left\langle f_1,f_2,f_3 \right\rangle < \Gamma$, which acts properly-discontinuosly and freely on $\C\times\mathbb H$, with quotient $\tilde X :=  (\C\times\mathbb H) \slash \Gamma^\prime$ which is described as follows:
\begin{equation}\label{quot}
\tilde{X}= \left\{ \left(\begin{array}{ccc}1&y_1&x_1\\&y_2&x_2\\&&1\end{array}\right):x_1,y_1,x_2\in \R, y_2\in\R^{>0}\right\}/\Gamma'',
\end{equation}
where $\Gamma''\simeq\Gamma'$ is the discrete group generated by the matrices
$$g_1=\left(\begin{array}{ccc}1&b_1&c_1\\&1&a_1\\&&1\end{array}\right),\quad g_2=\left(\begin{array}{ccc}1&b_2&c_2\\&1&a_2\\&&1\end{array}\right),\quad g_3=\left(\begin{array}{ccc}1&0&c_3\\&1&0\\&&1\end{array}\right),$$
acting by left multiplication, where
$$ c_3 = \frac{b_1\cdot a_2-b_2\cdot a_1}{r} . $$
Indeed, setting $z=x_1+\sqrt{-1}y_1$ and $w=x_2+\sqrt{-1}y_2$, the action of the matrix $g_j$ is identical to the action of $f_j$, for $j=1,2,3.$  The projection $\pi_1\colon \tilde X \to \R^{>0}$ is induced by $(z,w)\mapsto \Im w$, with fibers $X_{y_2}=\pi_1^{-1}(y_2)$, and we have that $\tilde{X}$ is diffeomorphic to $X\times\R^{>0}$ where $X=X_1$ is the compact $3$-dimensional nilmanifold
$$X_1= H(3;\R) \slash\Gamma''= \left\{ \left(\begin{array}{ccc}1&y_1&x_1\\&1&x_2\\&&1\end{array}\right)=:(x_2,y_1,x_1):x_1,y_1,x_2\in \R\right\}/\Gamma'',$$
where $H(3;\R)$ is the Heisenberg group.
Since $f_0$ lies in the normalizer of $\Gamma'$ by \cite[p. 276]{inoue}, it follows that $f_0$ induces a diffeomorphism $\Psi:X_1\stackrel{\simeq}{\to} X_{\gamma}$ and we have
$$S^+ = \tilde X \slash \langle f_0 \rangle
\simeq X \times [1,\gamma]\slash \left( (p,1)\sim(\Psi(p),\gamma) \right) .$$
The projection
$$ \pi \colon S^+ \ni (z,w) \mapsto y_2 \in \left. \R^{>0} \middle\slash \left\langle y_2\mapsto \gamma\cdot y_2\right\rangle \simeq \mathbb{S}^1 \right. $$
yields a structure of $X$-bundle over $\mathbb S^1$.

\subsection{Foliation}
The form $\frac{\sqrt{-1}}{4y_2^2}dw\wedge d\bar w$ being closed, it induces a foliation on $S^+$, whose leaves are dense in the $X$-fibre.

More precisely, the kernel of $\frac{\sqrt{-1}}{4y_2^2}dw\wedge d\bar w$ on $\C \times \mathbb H$ is the integrable distribution
$$ \mathcal{D} := \mathrm{span}_{\C} \left\{ \frac{\partial}{\partial z} \right\} ,$$
whose leaves are of the form
$$ \mathcal{L}_{w_0} := \left\{ (z,w_0) \in \C\times\mathbb H : z \in \C \right\} , w_0\in\mathbb H.$$
It induces a holomorphic foliation $\mathcal{D}$ on $S^+$, without singularities, and whose leaves $P(\mathcal{L}_{w_0})$ are biholomorphic to $\C^*$, and under the projection $P \colon \C\times\mathbb H \to S^+$, for any $w_0\in\mathbb H$, the image $P(\mathcal{L}_{w_0})$ is dense in the $X$-fibre $\pi^{-1}(\Im w_0)\subset S^+$ (see {\itshape e.g.} \cite[Lemma 6.2]{tosatti-weinkove-comp} or \cite[Proposition 2.1]{Br}).
Indeed, if they were not dense, then $(1,0,0)$ and $(0,1,0)$ would be elements of the lattice generated by $(a_1,b_1,c_1)$, $(a_2,b_2,c_2)$, and $(0,0,c_3)$, which would contradict Remark \ref{irraz}.

\subsection{Solvmanifold structure} See \cite{hasegawa}.
Write $\Im \mathbf{t}=m\log\gamma,$ for some $m\in\mathbb{R}$, and
consider the global co-frame on $S^+$ given by
$$
 e^1 := dx_1-\frac{y_1-m\log y_2}{y_2}\, dx_2 , \qquad
 e^2 := dy_1-\frac{y_1-m\log y_2}{y_2}\, dy_2 ,
$$
$$
 e^3 := \frac{1}{y_2}\, d x_2 , \qquad
 e^4 := \frac{1}{y_2}\, d y_2 , \qquad
$$

In terms of the corresponding dual frame:
$$ e_1 = \frac{\partial}{\partial x_1}, \qquad
e_3 = (y_1-m\log y_2)\frac{\partial}{\partial x_1}+y_2\frac{\partial}{\partial x_2} , $$
$$ e_2 = \frac{\partial}{\partial y_1}, \qquad
e_4 = (y_1-m\log y_2)\frac{\partial}{\partial y_1}+y_2\frac{\partial}{\partial y_2} . $$

The structure equations are
$$
 de^1 = - e^{2}\wedge e^{3} - m\, e^{3}\wedge e^{4}, \qquad
 de^2 = - e^{2}\wedge e^{4},
$$
$$
 de^3 = e^{3}\wedge e^{4}, \qquad
 de^4 = 0 .
$$

The complex structure is
$$ Je_1 := e_2 , \qquad Je_3 := e_4 , $$
in terms of the frame; in terms of the co-frame, $Je^1=-e^2$, $Je^3=-e^4$, so a global co-frame of $(1,0)$-forms on $S^+$ is given by
$$ \varphi^1 := e^1+\sqrt{-1}e^2 = dz - \frac{y_1-m\log y_2}{y_2}\, dw ,$$
$$ \varphi^2 := e^3+\sqrt{-1}e^4 = \frac{1}{y_2}\, dw , $$
with structure equations
$$
d\varphi^1 = \frac{\sqrt{-1}}{2}\varphi^1\wedge\varphi^2+\frac{\sqrt{-1}}{2}\varphi^2\wedge\bar\varphi^1-m\,\frac{\sqrt{-1}}{2}\varphi^2\wedge\bar\varphi^2,
$$
$$
d\varphi^2=\frac{\sqrt{-1}}{2}\varphi^2\wedge\bar\varphi^2 .
$$

\subsection{Vaisman-Tricerri metric}
Consider the degenerate metric
\begin{equation}\tag{\ref{eq:alpha}}
 \alpha := \frac{\sqrt{-1}}{4y_2^2}dw\wedge d\bar w = \frac{\sqrt{-1}}{4}\varphi^2\wedge\bar\varphi^2 .
\end{equation}

Consider the Vaisman-Tricerri metric
$$ \omega_V := 4\alpha+\sqrt{-1}\varphi^1\wedge\bar\varphi^1 = \sqrt{-1}\varphi^1\wedge\bar\varphi^1 + \sqrt{-1}\varphi^2\wedge\bar\varphi^2 .$$
This was discovered in \cite{tricerri} when $\mathbf{t}$ is real and in \cite{vaisman} in general. This metric is lcK if and only if $\mathbf{t}$ is real.

\section{Gauduchon metrics on Inoue-Bombieri surfaces}\label{sec:gauduchon}

Consider an arbitrary Hermitian metric on an Inoue-Bombieri surface $S$ \dan{(either of type $S_M$ or $S^+$)}, which must be of the form
\begin{equation}\label{eq:generic-metric}
   \omega := \sqrt{-1}r\, \varphi^1\wedge\bar\varphi^1 + \sqrt{-1}s\, \varphi^2\wedge\bar\varphi^2
   + u\, \varphi^1\wedge\bar\varphi^2 - \bar u\, \varphi^2\wedge\bar\varphi^1
\end{equation}
where $r,s\in\mathcal{C}^\infty(S;\R)$ and $u\in\mathcal{C}^\infty(S;\C)$ are such that $r>0$, $s>0$, and $rs-|u|^2>0$, at every point.
\dan{In this section, we investigate the {\em Gauduchon condition} for $\omega$, namely,
$$ \partial\overline\partial\omega=0 . $$
}

\subsection{Gauduchon metrics on Inoue-Bombieri surfaces of type \texorpdfstring{$S_M$}{SM}}
If $\omega$ is Gauduchon, then
$$ d\int_{\pi^{-1}(y_2)} \overline\partial \omega = 0 , $$
therefore the function $\tau(y_2):=\int_{\pi^{-1}(y_2)} \overline\partial \omega$ is constant on $\mathbb S^1$.

We compute
\begin{eqnarray*}
   \overline\partial\omega &=&
   \left( \sqrt{-1}\cdot y_2 \cdot \partial_{\bar w} r - \frac{1}{2}\cdot r - \frac{1}{\sqrt{y_2}}\cdot \partial_{\bar z}u \right) \, \varphi^1\wedge\bar\varphi^1\wedge\bar\varphi^2 \\
   && + \left( -\frac{\sqrt{-1}}{\sqrt{y_2}}\partial_{\bar z}s - y_2\cdot \partial_{\bar w}\bar u + \frac{\sqrt{-1}}{4}\cdot \bar u \right) \, \varphi^2\wedge\bar\varphi^1\wedge\bar\varphi^2 .
\end{eqnarray*}
Therefore, \dan{by straightforward computations,
\begin{eqnarray*}
 \tau(y_2)
 &=& \int_{\pi^{-1}(y_2)} \left( 2\cdot y_2 \cdot \partial_{\bar w} r + \sqrt{-1}\cdot r + \frac{2\sqrt{-1}}{\sqrt{y_2}}\cdot \partial_{\bar z}u \right)\, e^{1}\wedge e^{2} \wedge e^{3} \\
 &=& \int_{\pi^{-1}(y_2)} \left( y_2 \cdot \partial_{x_2} r - \frac{1}{\sqrt{y_2}}\cdot \partial_{y_1}u \right)\, e^{1}\wedge e^{2} \wedge e^{3} \\
 && + \sqrt{-1}\cdot \int_{\pi^{-1}(y_2)} \left( y_2 \cdot \partial_{y_2} r + r + \frac{1}{\sqrt{y_2}}\cdot \partial_{x_1}u \right)\, e^{1}\wedge e^{2} \wedge e^{3} \\
&=&
\int_{\pi^{-1}(y_2)} dr\wedge e^1\wedge e^2 + \int_{\pi^{-1}(y_2)} du \wedge e^{1}\wedge e^{3} \\
&& +\sqrt{-1}\cdot \int_{\pi^{-1}(y_2)} \left( y_2 \cdot \partial_{y_2} r + r \right)\, e^{1}\wedge e^{2} \wedge e^{3}+ \int_{\pi^{-1}(y_2)} du\wedge e^2 \wedge e^3 \\
&=&
\sqrt{-1}\cdot \int_{\pi^{-1}(y_2)} \left( y_2 \cdot \partial_{y_2} r + r \right)\, e^{1}\wedge e^{2} \wedge e^{3} \\
&=&
\sqrt{-1}\cdot \left( y_2 \cdot \partial_{y_2} R + R \right) ,
\end{eqnarray*}
}
where
$$ R(y_2) := \int_{\pi^{-1}(y_2)} r \, dx_1\wedge dy_1 \wedge dx_2 = \int_{\pi^{-1}(y_2)} r \, \mathrm{vol}_{\omega_T\lfloor \pi^{-1}(y_2)} . $$
By acting with $y_2\frac{\partial}{\partial y_2}$ on the constant quantity $\tau(y_2)$, we get the ordinary differential equation
$$ \Delta_{y_2^{-2}(dy_2)^2} R := y_2^2 \cdot \ddot R + y_2\cdot \dot R = 0 , $$
where $\dot R=\frac{dR}{dy_2}$, and $\Delta_{y_2^{-2}(dy_2)^2}$ is the Laplace-Beltrami operator on $\mathbb{S}^1$ with respect to the metric $y_2^{-2} (dy_2)^2$.
Therefore, it has no solution on $\mathbb{S}^1$ except than constants.

Therefore we get that

\begin{lem}\label{lem:gaud-sm}
On an Inoue-Bombieri surface of type $S_M$, if the Hermitian metric $\omega$ as in \eqref{eq:generic-metric} is Gauduchon, then $R:=\int_{\pi^{-1}(y_2)} r \, \mathrm{vol}_{\omega_{T}\lfloor \pi^{-1}(y_2)}$ does not depend on $y_2$.
\end{lem}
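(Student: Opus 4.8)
The plan is to extract from the Gauduchon equation a scalar ordinary differential equation on the base circle for the fibrewise integral $R$ of $r$, and then to observe that this equation has no nonconstant solutions. First I would note that, since $\overline\partial^2=0$, the Gauduchon condition $\partial\overline\partial\omega=0$ is equivalent to $d(\overline\partial\omega)=0$, i.e.\ the $(1,2)$-form $\overline\partial\omega$ is $d$-closed. Since $\omega$ is written in the $\Gamma$-invariant coframe $\varphi^1,\varphi^2$, it lifts to a $\Gamma$-invariant form on $\C\times\mathbb H$, and $\overline\partial\omega$ descends to a closed $3$-form on $\tilde X\simeq\mathbb T^3\times\R^{>0}$; since $\dim\mathbb T^3=3$ equals the degree of $\overline\partial\omega$, Stokes' theorem on $\mathbb T^3\times[y_2^{(0)},y_2^{(1)}]$ gives
$$\tau(y_2^{(1)})-\tau(y_2^{(0)})=\int_{\mathbb T^3\times[y_2^{(0)},y_2^{(1)}]}d\overline\partial\omega=0,\qquad\text{where }\ \tau(y_2):=\int_{\pi^{-1}(y_2)}\overline\partial\omega,$$
so $\tau$ is constant. (Equivalently, $\tau$ is the pushforward of the closed form $\overline\partial\omega$ along $\pi\colon S_M\to\mathbb S^1$, hence closed, hence constant on the connected base.)

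Next I would compute $\overline\partial\omega$ from \eqref{eq:generic-metric}, using the structure equations of $\varphi^1,\varphi^2$ together with $\varphi^1=\sqrt{y_2}\,dz$ and $\varphi^2=y_2^{-1}\,dw$, which yields the two-term expression displayed above. Restricting to a fibre $\pi^{-1}(y_2)$ amounts to setting $dy_2=0$, so that $\varphi^2$ and $\bar\varphi^2$ both restrict to $y_2^{-1}\,dx_2$; hence the $\varphi^2\wedge\bar\varphi^1\wedge\bar\varphi^2$ term restricts to zero and only the $\varphi^1\wedge\bar\varphi^1\wedge\bar\varphi^2$ term contributes, equal to $-2\sqrt{-1}\,dx_1\wedge dy_1\wedge dx_2$ times its coefficient. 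Expanding $\partial_{\bar z}$ and $\partial_{\bar w}$ into the real derivatives $\partial_{x_1},\partial_{y_1},\partial_{x_2},\partial_{y_2}$, every term which is a pure $x_1$-, $y_1$- or $x_2$-derivative of a coefficient is a fibrewise exact $2$-form on $\mathbb T^3$ and integrates to zero, leaving exactly
$$\tau(y_2)=\sqrt{-1}\int_{\pi^{-1}(y_2)}\big(y_2\,\partial_{y_2}r+r\big)\,dx_1\wedge dy_1\wedge dx_2=\sqrt{-1}\,\big(y_2\,\dot R+R\big);$$
here the last equality uses that, via $\tilde X\simeq\mathbb T^3\times\R^{>0}$, the fibre torus and the coordinates $x_1,y_1,x_2$ (modulo a fixed lattice) do not depend on $y_2$, so $\tfrac{d}{dy_2}$ commutes with integration over the fibre.

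Finally, since $\tau$ is constant, $y_2\,\dot R+R=\tfrac{d}{dy_2}(y_2R)$ equals a constant $a$, so $y_2R(y_2)=ay_2+b$ for some constant $b$, i.e.\ $R(y_2)=a+b\,y_2^{-1}$; but $R$ is a function on $\mathbb S^1=\R^{>0}/\langle y_2\mapsto\lambda y_2\rangle$, so $R(\lambda y_2)=R(y_2)$ forces $b=0$ (since $\lambda\neq 1$), and therefore $R\equiv a$ is constant, as claimed. (Equivalently, applying $y_2\tfrac{d}{dy_2}$ to the constant $\tau$ produces a linear second-order ODE for $R$ on $\mathbb S^1$, of the Laplace-type $\Delta_{y_2^{-2}(dy_2)^2}R=0$, whose only solutions are constants.)

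I expect the only delicate step to be the middle one: correctly computing the coefficients of $\overline\partial\omega$, carefully restricting to the fibre (in particular noticing the vanishing of the $\varphi^2\wedge\bar\varphi^1\wedge\bar\varphi^2$ term), and verifying that every surviving term other than $r$ itself is fibrewise exact --- the one genuinely substantive input being the product structure of $\tilde X$, which allows one to interchange $d/dy_2$ with integration over the fibre. The reduction of the Gauduchon condition to closedness of $\overline\partial\omega$ and the concluding ODE argument are routine.
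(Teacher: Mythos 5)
Your proof is correct and takes essentially the same route as the paper: derive constancy of $\tau(y_2)=\int_{\pi^{-1}(y_2)}\overline\partial\omega$ from the Gauduchon condition, compute $\tau=\sqrt{-1}\,(y_2\dot R+R)$ by restricting to the fibre and discarding fibre-exact terms, and deduce that $R$ is constant. Your final step---observing $y_2\dot R+R=\tfrac{d}{dy_2}(y_2R)=\mathrm{const}$ and integrating directly with the $\lambda$-periodicity of $R$---is a marginally more economical variant of the paper's second-order ODE argument, but the substance is identical.
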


\subsection{Gauduchon metrics on Inoue-Bombieri surfaces of type \texorpdfstring{$S^+$}{S+-}}
If $\omega$ is Gauduchon, then
$$ d\int_{\pi^{-1}(y_2)} \overline\partial \omega = 0 , $$
therefore the function $\tau(y_2):=\int_{\pi^{-1}(y_2)} \overline\partial \omega$ is constant on $\mathbb S^1$.

We compute
\begin{eqnarray*}
   \overline\partial\omega &=&
   \left( \sqrt{-1} \cdot \partial_{\bar z} r \cdot (y_1-m\log y_2) + \sqrt{-1} \cdot \partial_{\bar w}r \cdot y_2 - \partial_{\bar z} u - \frac{1}{2} \cdot r \right) \, \varphi^1\wedge\bar\varphi^1\wedge\bar\varphi^2 \\
   && + \left( -\sqrt{-1} \cdot \partial_{\bar z}s - \partial_{\bar z}\bar u \cdot (y_1-m\log y_2) - \partial_{\bar w}\bar u \cdot  y_2 - \frac{m}{2} \cdot r+\frac{\sqrt{-1}}{2} \cdot u \right) \, \varphi^2\wedge\bar\varphi^1\wedge\bar\varphi^2 .
\end{eqnarray*}
\dan{
Therefore, by straightforward computations,
\begin{eqnarray*}
\tau(y_2)
&=& \int_{\pi^{-1}(y_2)} \left( 2 \cdot \partial_{\bar z} r \cdot (y_1-m\log y_2) + 2 \cdot \partial_{\bar w}r \cdot y_2 + 2\sqrt{-1}  \partial_{\bar z} u + \sqrt{-1}  \cdot r \right) \, e^1\wedge e^2\wedge e^3 \\
&=&
\int_{\pi^{-1}(y_2)} \left( \partial_{x_1} r \cdot (y_1-m\log y_2) + \partial_{x_2} r \cdot y_2 - \partial_{y_1}u \right) \cdot e^{1}\wedge e^{2}\wedge e^3 \\
&& + \sqrt{-1} \cdot \int_{\pi^{-1}(y_2)} \left( \partial_{y_1} r \cdot (y_1-m\log y_2) + \partial_{y_2}r \cdot y_2 + \partial_{x_1}u + r \right) \, e^1\wedge e^2\wedge e^3 \\
&=&
\int_{\pi^{-1}(y_2)} dr \wedge e^{1} \wedge e^{2} + \int_{\pi^{-1}(y_2)} du \wedge e^1 \wedge e^3 + \sqrt{-1} \cdot \int_{\pi^{-1}(y_2)} \left( \partial_{y_2}r \cdot y_2 + r \right) \, e^1\wedge e^2\wedge e^3 \\
&& - \sqrt{-1} \cdot \int_{\pi^{-1}(y_2)} (y_1-m\log y_2)\cdot dr \wedge e^1 \wedge e^3 + \sqrt{-1}\cdot \int_{\pi^{-1}(y_2)} du \wedge e^2 \wedge e^3 \\
&=&
\sqrt{-1} \cdot \int_{\pi^{-1}(y_2)} \partial_{y_2}r \cdot y_2 \cdot e^1\wedge e^2\wedge e^3 \\
&=& \sqrt{-1}\cdot y_2\cdot \partial_{y_2} R ,
\end{eqnarray*}
}
where
$$ R(y_2) := \int_{\pi^{-1}(y_2)} r \cdot e^1\wedge e^2\wedge e^3 = \int_{\pi^{-1}(y_2)} r \, \frac{1}{y_2} dx_1\wedge dy_1 \wedge dx_2 . $$
Hence we get the ordinary differential equation
$$ y_2 \cdot \ddot R + \dot R = 0 , $$
where $\dot R=\frac{dR}{dy_2}$.
It has no solution on $\mathbb{S}^1$ except than constants.

Therefore we get that

\begin{lem}\label{lem:gauduchon-s+-}
 On an Inoue-Bombieri surface of type $S^{+}$, if the Hermitian metric $\omega$ as in \eqref{eq:generic-metric} is Gauduchon, then $R:=\int_{\pi^{-1}(y_2)} r \, \mathrm{vol}_{\omega_{V}\lfloor \pi^{-1}(y_2)}$ does not depend on $y_2$.
\end{lem}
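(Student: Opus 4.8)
The plan is to extract a first-order differential equation for $R$ from the Gauduchon condition by integrating $\overline\partial\omega$ over the nilmanifold fibres of $\pi\colon S^+\to\mathbb S^1$.

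First, since $S^+$ is a complex surface we have $d\overline\partial\omega=\partial\overline\partial\omega$ (the $(1,3)$-part of $d\overline\partial\omega$ vanishing for bidegree reasons), so the hypothesis $\partial\overline\partial\omega=0$ is equivalent to $\overline\partial\omega$ being $d$-closed. As $\overline\partial\omega$ is a $3$-form and each fibre of $\pi$ is a compact oriented $3$-manifold, the function $\tau(y_2):=\int_{\pi^{-1}(y_2)}\overline\partial\omega$ on $\mathbb S^1$ satisfies $d\tau=0$ (integration along the fibre commutes with $d$, and $d\overline\partial\omega=0$), hence $\tau$ is constant on $\mathbb S^1$.

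Next I would compute $\tau$ explicitly. Writing $\omega$ in the invariant coframe as in \eqref{eq:generic-metric} and differentiating with the structure equations of $S^+$, one obtains $\overline\partial\omega$ as a combination of $\varphi^1\wedge\bar\varphi^1\wedge\bar\varphi^2$ and $\varphi^2\wedge\bar\varphi^1\wedge\bar\varphi^2$ with explicit coefficients, linear in $r,s,u$ and their first derivatives. Restricted to a fibre, where $e^4=\tfrac1{y_2}\,dy_2$ vanishes, $\bar\varphi^2$ becomes a multiple of $e^3$, so the $\varphi^2\wedge\bar\varphi^1\wedge\bar\varphi^2$-component — which carries all of the $s$-dependence — restricts to zero, and $\overline\partial\omega\lfloor_{\pi^{-1}(y_2)}$ becomes an explicit function (linear in $r,u$ and their first derivatives, with coefficients involving $y_2$ and $y_1-m\log y_2$) times $e^1\wedge e^2\wedge e^3$. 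On the fibre the structure equations collapse to $de^1=-e^2\wedge e^3$ and $de^2=de^3=0$, so every ``fibrewise exact'' term integrates to zero by Stokes on the closed fibre; this disposes of all the $u$-terms, and the remaining $r$-terms are matched against $R$ and $\dot R$ by differentiating $R$ under the integral sign — which must be done carefully, since the identification of $\pi^{-1}(y_2)$ with a fixed nilmanifold depends on $y_2$. One does this via the $\Gamma'$-invariant vector field $\tfrac1{y_2}e_4$, dual to $e^4$, which projects to $\partial/\partial y_2$ on the base and satisfies $\mathcal{L}_{\frac1{y_2}e_4}(e^1\wedge e^2\wedge e^3)=0$ by the structure equations. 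The upshot is that $\tau(y_2)$ is $\sqrt{-1}$ times a first-order linear differential expression in $R$ whose coefficients are affine in $y_2$.

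Finally, since $\tau$ is constant this gives a first-order linear ODE for $R$ on $\mathbb S^1$ whose non-constant solutions involve $\log y_2$ or $1/y_2$; neither is invariant under $y_2\mapsto\gamma y_2$ (recall $\gamma>1$), so $R$ is forced to be constant, which is the assertion of the lemma. The step I expect to be the main obstacle is the bookkeeping in these Stokes reductions together with the correct computation of $\dot R$: the coframe entry $y_1-m\log y_2$ is not fibrewise periodic (only its differential $dy_1$ is), so naive differentiation under the integral is not legitimate, and this is exactly the point where the nilmanifold case $S^+$ is more delicate than the torus case $S_M$.
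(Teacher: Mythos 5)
Your proposal takes essentially the same route as the paper: show $\tau(y_2):=\int_{\pi^{-1}(y_2)}\overline\partial\omega$ is constant from the Gauduchon condition, compute it in terms of $R$ and its $y_2$-derivative via Stokes reductions on the fibre, and conclude $R$ is constant from periodicity under $y_2\mapsto\gamma y_2$. The subtlety you flag — that $y_1-m\log y_2$ is not fibrewise periodic, so both the Stokes reductions and the differentiation of $R$ under the integral must be done with care — is genuine, and your suggestion of using the $\Gamma'$-invariant vector field $\frac{1}{y_2}e_4$ (which projects to $\partial/\partial y_2$ and satisfies $\mathcal{L}_{\frac{1}{y_2}e_4}(e^1\wedge e^2\wedge e^3)=0$) is exactly the right tool; carried out this way the computation gives $\tau(y_2)=\sqrt{-1}\bigl(y_2\,\partial_{y_2}R+R\bigr)$, the same form as in the $S_M$ case, rather than the paper's displayed $\sqrt{-1}\,y_2\,\partial_{y_2}R$ (one can sanity-check this against $\omega=\omega_V$, where $r\equiv 1$, $R$ is constant, yet $\tau=\sqrt{-1}\,\mathrm{vol}(X_{y_2})=\sqrt{-1}R\neq 0$; the discrepancy comes from an implicit Stokes step applied to the non-invariant $2$-form $(y_1-m\log y_2)r\,e^1\wedge e^3$). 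Either way this changes nothing in the end: each of the two resulting first-order ODEs for $R$ on $\mathbb{S}^1$ has only constant periodic solutions (the non-constant ones being multiples of $1/y_2$, respectively $\log y_2$, neither invariant under $y_2\mapsto\gamma y_2$), and the lemma follows exactly as you describe.
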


\section{Strongly leafwise flat forms on Inoue-Bombieri surfaces}\label{sec:slf-metrics}

\subsection{Strongly leafwise flat forms}
Recall that a real $(1,1)$-form $\eta$ on an Inoue-Bombieri surface $S$ is called {\em strongly leafwise flat} if
$$ \frac{\eta\wedge\alpha}{\omega_{TV}^2} \in\R^{>0} , $$
where the constant equals $\frac{\int_{S}\eta\wedge\alpha}{\int_{S}\omega_{TV}^2}$, and where $\omega_{TV}$ denotes either the Tricerri or the Vaisman-Tricerri metric according to the type of $S$.

The {\em $\partial\overline\partial$-class} of $\omega$ is given by
$$ \omega_u := \omega+\sqrt{-1}\partial\overline\partial u > 0 $$
varying $u\in\left.\mathcal{C}^\infty(S;\R)\middle\slash\R\right.$.

Looking for $u$ as above such that $\omega+\sqrt{-1}\partial\overline\partial u$, \dan{in the $\partial\overline\partial$-class of $\omega$}, is a strongly leafwise flat $(1,1)$-form is equivalent to solving the equation
\begin{equation}\label{eq:Deltau=G}
\Delta_{\mathcal D}u = G(\omega) ,
\end{equation}
where
$$ \Delta_{\mathcal{D}}u := \frac{\sqrt{-1}\,\partial\overline\partial u\wedge\alpha}{\omega_{TV}^2} $$
is a degenerate elliptic operator on $\mathcal{C}^\infty(S;\R)$, and where we set
$$ G(\omega) := -\frac{\omega\wedge\alpha}{\omega_{TV}^2}+\frac{\int_{S}\omega\wedge\alpha}{\int_{S}\omega_{TV}^2} . $$

More precisely, we have that
\begin{eqnarray*}
\Delta_{\mathcal{D}}u = \frac{1}{8y_2} \, \partial_{z\bar z}u &\qquad& \text{ in case }S_M, \\
\Delta_{\mathcal{D}}u = \frac{1}{8} \, \partial_{z\bar z}u &\qquad& \text{ in case }S^\pm,
\end{eqnarray*}
is the Laplacian along the leaves with respect to the Tricerri, respectively Vaisman-Tricerri metric.

For any given Hermitian metric $\omega$ on $S$, which we can write in the form \eqref{eq:generic-metric}, we have
\begin{equation}\label{g}
 G(\omega) = -\frac{1}{8} r + \frac{1}{8} \, \dashint_{S} r \omega_{TV}^2 .
\end{equation}

\subsection{Gauduchon obstruction}
Consider
\begin{eqnarray*}
\ker\Delta_{\mathcal D} &=& \pi^*\mathcal{C}^\infty(\mathbb{S}^1;\R) \\
&=& \left\{ \psi=\psi(y_2) \in \mathcal{C}^\infty(\R^{>0};\R) : \psi(\Lambda \cdot y_2)=\psi(y_2) \text{ for any }y_2 \right\} ,
\end{eqnarray*}
where $\Lambda=\lambda$ in case $S_M$, and $\Lambda=\gamma$ in case $S^+$.
Indeed, if $u\in\mathcal{C}^\infty(S;\R)$ is in the kernel of $\Delta_{\mathcal D}$, then the restriction of $u$ to each leaf is a bounded harmonic function on $\C$ or $\C^*$, whence constant. Since each leaf is dense in a fibre of $\pi\colon S\to \mathbb{S}^1$, then $u$ is the pull-back of a smooth function over $\mathbb S^1$.
Conversely, each such function is in the kernel of $\Delta_{\mathcal D}$.

\begin{lem}\label{lem:obstr}
Given any Hermitian metric $\omega$ on $S$, if there is a smooth function $u$ such that $\omega+\sqrt{-1}\,\partial\overline\partial u$ is strongly leafwise flat, then we must have that
\begin{equation}\label{necess}
\int_{S} \psi G(\omega)\omega_{TV}^2=0,
\end{equation}
for all $\psi\in \ker\Delta_{\mathcal D}$.
\end{lem}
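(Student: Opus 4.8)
The plan is to show that the integral condition \eqref{necess} is simply the $L^2$-orthogonality \eqref{obstrukt} written out explicitly, and that it is a formal necessary consequence of the equation $\Delta_{\mathcal D}u = G(\omega)$ together with the self-adjointness of $\Delta_{\mathcal D}$ on $L^2(\omega_{TV}^2)$. First I would recall that asking for $u$ with $\omega+\sqrt{-1}\,\partial\overline\partial u$ strongly leafwise flat is, by the discussion preceding the lemma, exactly asking that $u$ solve \eqref{eq:Deltau=G}, i.e. $\Delta_{\mathcal D}u = G(\omega)$. So it suffices to show that if this equation has a smooth solution $u$, then $G(\omega)$ is $L^2(\omega_{TV}^2)$-orthogonal to every $\psi\in\ker\Delta_{\mathcal D}$.

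The key computation is the integration-by-parts identity
$$ \int_S \psi\,(\Delta_{\mathcal D}u)\,\omega_{TV}^2 = \int_S u\,(\Delta_{\mathcal D}\psi)\,\omega_{TV}^2 $$
for all smooth $u,\psi$ on $S$. To see this, note $\Delta_{\mathcal D}u = \frac{\sqrt{-1}\,\partial\overline\partial u\wedge\alpha}{\omega_{TV}^2}$, so $(\Delta_{\mathcal D}u)\,\omega_{TV}^2 = \sqrt{-1}\,\partial\overline\partial u\wedge\alpha$ as top-degree forms. Since $\alpha$ is closed (it is the pullback of the Poincar\'e metric on $\mathbb H$, hence $d\alpha=0$), we have, on the compact manifold $S$ without boundary, using $d(\psi\,\partial u\wedge\alpha) = \overline\partial\psi\wedge\partial u\wedge\alpha + \psi\,\overline\partial\partial u\wedge\alpha$ and $d(u\,\overline\partial\psi\wedge\alpha)= \partial u\wedge\overline\partial\psi\wedge\alpha + u\,\partial\overline\partial\psi\wedge\alpha$ (all other terms vanishing for bidegree reasons, since $\alpha$ is a $(1,1)$-form and $\partial u\wedge\partial(\cdot)\wedge\alpha$, etc., are zero as they involve $dz$-type pieces only up to the allowed degree along the leaf), Stokes' theorem yields
$$ 0 = \int_S \overline\partial\psi\wedge\partial u\wedge\alpha - \int_S \psi\,\sqrt{-1}\,\partial\overline\partial u\wedge\alpha, \qquad 0 = \int_S \partial u\wedge\overline\partial\psi\wedge\alpha - \int_S u\,\sqrt{-1}\,\partial\overline\partial\psi\wedge\alpha, $$
and subtracting (noting $\overline\partial\psi\wedge\partial u = \partial u\wedge\overline\partial\psi$ as $2$-forms) gives the claimed symmetry. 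A cleaner route, which I would present in the paper, is to use the explicit expressions $\Delta_{\mathcal D}u = \frac{1}{8y_2}\partial_{z\bar z}u$ (case $S_M$) and $\Delta_{\mathcal D}u = \frac{1}{8}\partial_{z\bar z}u$ (case $S^\pm$), and integrate by parts along the leaves inside the fiber, then over the base $\mathbb S^1$; the weights built into $\omega_{TV}^2$ are exactly what makes $\Delta_{\mathcal D}$ symmetric with respect to $\omega_{TV}^2$.

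Granting the symmetry, the conclusion is immediate: if $u$ solves \eqref{eq:Deltau=G} and $\psi\in\ker\Delta_{\mathcal D}$, then
$$ \int_S \psi\,G(\omega)\,\omega_{TV}^2 = \int_S \psi\,(\Delta_{\mathcal D}u)\,\omega_{TV}^2 = \int_S u\,(\Delta_{\mathcal D}\psi)\,\omega_{TV}^2 = 0, $$
which is \eqref{necess}. I do not expect a serious obstacle here; the only point requiring care is making the integration by parts rigorous, but since $S$ is compact without boundary and all forms involved are smooth, Stokes' theorem applies directly, and the vanishing of the boundary-type terms is automatic. One should just double-check the bidegree bookkeeping in the Stokes argument (that the omitted wedge terms genuinely vanish), or, more transparently, verify the weighted-Laplacian symmetry by the explicit local formulas together with the description $\ker\Delta_{\mathcal D}=\pi^*\mathcal C^\infty(\mathbb S^1;\mathbb R)$ already established above — in that description $\Delta_{\mathcal D}\psi = 0$ literally, so the middle equality is trivial once symmetry is in hand.
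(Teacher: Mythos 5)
Your proof is correct and follows essentially the same path as the paper: rewrite $G(\omega)\,\omega_{TV}^2 = \sqrt{-1}\,\partial\overline\partial u\wedge\alpha$, integrate by parts twice using Stokes together with $d\alpha=0$ to move $\partial\overline\partial$ from $u$ onto $\psi$, and conclude since $\psi\in\ker\Delta_{\mathcal D}$. The paper states the chain of equalities without unpacking the Stokes bookkeeping; your version just makes that step explicit.
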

\begin{proof}
Indeed, for $\psi\in \ker\Delta_{\mathcal D}$, we have
\begin{eqnarray*}
\int_{S} \psi G(\omega)\omega_{TV}^2 &=& \int_{S} \psi \Delta_{\mathcal{D}}u\omega_{TV}^2 = \int_{S}\psi \sqrt{-1}\,\partial\overline\partial u\wedge\alpha \\
&=& \int_{S}u \sqrt{-1}\,\partial\overline\partial \psi\wedge\alpha = \int_{S} u\Delta_{\mathcal{D}}\psi\omega_{TV}^2 = 0 ,
\end{eqnarray*}
yielding the statement.
\end{proof}

It is clear that every invariant metric on $S$ satisfies \eqref{necess}, since by invariance we have $G(\omega)=0$ in this case.

\begin{lem}\label{lem:obstr2}
The obstruction \eqref{necess} is satisfied for all $\psi\in \ker\Delta_{\mathcal D}$ if and only if we have
\begin{equation}\label{necessa}
\int_{\pi^{-1}(y_2)} G(\omega) \, \mathrm{vol}_{\omega_{TV}\lfloor \pi^{-1}(y_2)}=0,
\end{equation}
for all $y_2\in \mathbb{S}^1$.
\end{lem}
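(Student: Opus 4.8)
The plan is to reduce the statement \eqref{necess} for all $\psi\in\ker\Delta_{\mathcal D}$ — which we have just identified with $\pi^*\mathcal C^\infty(\mathbb S^1;\mathbb R)$ — to the fiberwise integral condition \eqref{necessa} by first integrating along the leaves (which fill up the fibers densely) and then along the base circle. Concretely, the key observation is the disintegration of the volume form $\omega_{TV}^2$ along the bundle $\pi\colon S\to\mathbb S^1$: since the fibers $\pi^{-1}(y_2)$ carry the induced volume $\mathrm{vol}_{\omega_{TV}\lfloor\pi^{-1}(y_2)}$ and $\omega_{TV}$ is invariant, there is a smooth positive base density so that for any $h\in\mathcal C^\infty(S;\mathbb R)$ one has
\begin{equation*}
\int_S h\,\omega_{TV}^2 = \int_{\mathbb S^1}\Big(\int_{\pi^{-1}(y_2)} h\,\mathrm{vol}_{\omega_{TV}\lfloor\pi^{-1}(y_2)}\Big)\,d\mu(y_2),
\end{equation*}
and in fact with the normalizations fixed in Sections \ref{sec:inoue-sm}--\ref{sec:inoue-s+-} the base measure $d\mu$ can be taken to be the standard one on $\mathbb S^1=\mathbb R^{>0}/\langle y_2\mapsto\Lambda y_2\rangle$ coming from $e^4=\frac{1}{y_2}dy_2$; this is just the statement that $\omega_{TV}^2$ is a constant multiple of $e^{1}\wedge e^{2}\wedge e^{3}\wedge e^{4}$, which is immediate from $\omega_{TV}=\sqrt{-1}\varphi^1\wedge\bar\varphi^1+\sqrt{-1}\varphi^2\wedge\bar\varphi^2$ and the definitions of the $\varphi^i$.

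Granting this, the proof is a short Fubini argument. For the \emph{if} direction, suppose \eqref{necessa} holds for all $y_2$. Given $\psi\in\ker\Delta_{\mathcal D}$, write $\psi=\pi^*\tilde\psi$ for $\tilde\psi\in\mathcal C^\infty(\mathbb S^1;\mathbb R)$; then since $\psi$ is constant on each fiber,
\begin{equation*}
\int_S \psi\,G(\omega)\,\omega_{TV}^2 = \int_{\mathbb S^1}\tilde\psi(y_2)\Big(\int_{\pi^{-1}(y_2)} G(\omega)\,\mathrm{vol}_{\omega_{TV}\lfloor\pi^{-1}(y_2)}\Big)\,d\mu(y_2) = 0,
\end{equation*}
because the inner integral vanishes identically by hypothesis. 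For the \emph{only if} direction, suppose \eqref{necess} holds for all $\psi\in\ker\Delta_{\mathcal D}$. Define $F(y_2):=\int_{\pi^{-1}(y_2)} G(\omega)\,\mathrm{vol}_{\omega_{TV}\lfloor\pi^{-1}(y_2)}$, which is a smooth function on $\mathbb S^1$ (smoothness follows from smoothness of $G(\omega)$ and of the bundle structure). By the disintegration formula, $\int_{\mathbb S^1}\tilde\psi\,F\,d\mu=0$ for every $\tilde\psi\in\mathcal C^\infty(\mathbb S^1;\mathbb R)$; taking $\tilde\psi=F$ gives $\int_{\mathbb S^1}F^2\,d\mu=0$, hence $F\equiv 0$, which is exactly \eqref{necessa}.

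The only genuinely nontrivial point is the disintegration/Fubini step, i.e. justifying that integration of a fiber-constant function against $\omega_{TV}^2$ factors as iterated integration over base and fiber with the stated densities. This is really bookkeeping with the invariant coframe: in the coordinates of the solvmanifold presentation the fibers $\pi^{-1}(y_2)$ are the level sets $\{y_2=\mathrm{const}\}$, the restriction of $\omega_{TV}^2$ to such a level set is (a constant times) $e^1\wedge e^2\wedge e^3$, and the transverse one-form is $e^4=d(\log y_2)$; Fubini on the fundamental domain and $\Gamma$-invariance of all the objects involved then give the formula on $S$. I do not expect any serious obstacle beyond being careful that the fiberwise measures used in \eqref{necessa} (namely $\mathrm{vol}_{\omega_{TV}\lfloor\pi^{-1}(y_2)}$, already appearing in Lemmas \ref{lem:gaud-sm} and \ref{lem:gauduchon-s+-}) match the ones produced by the disintegration up to an overall positive constant, which does not affect the vanishing statement. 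Once that is in place the equivalence is immediate from the two Fubini computations above.
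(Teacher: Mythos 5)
Your proof is correct and follows essentially the same Fubini/disintegration approach as the paper: both directions hinge on factoring $\int_S(\cdot)\,\omega_{TV}^2$ as an iterated integral over base and fiber, using that $\omega_{TV}^2$ is a constant multiple of $e^1\wedge e^2\wedge e^3\wedge e^4$. The only small difference is in the ``only if'' direction, where the paper argues by contradiction with a nonnegative bump function supported near a point where the fiber integral is nonzero, whereas you take $\tilde\psi=F$ directly and conclude from $\int_{\mathbb S^1}F^2\,d\mu=0$; both are valid and yours is marginally shorter.
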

\begin{proof}
In one direction, suppose \eqref{necessa} holds and let $\psi$ be any element of $\ker\Delta_{\mathcal D}$, so $\psi$ is the pullback to $S$ of a smooth function on $\mathbb{S}^1$. Then we have
$$ \int_{S}\psi G(\omega)\omega_{TV}^2=c\int_{\mathbb{S}^1}\psi \left( \int_{\pi^{-1}(y_2)} G(\omega) \mathrm{vol}_{\omega_{TV}\lfloor \pi^{-1}(y_2)} \right)\frac{dy_2}{y_2}=0,$$
where $c$ is a numerical positive constant.

For the converse, if we had
$\int_{\pi^{-1}(q_0)} G(\omega) \, \mathrm{vol}_{\omega_T\lfloor \pi^{-1}(q_0)}>0$ for some $q_0\in \mathbb{S}^1$, then by continuity this would be true for all $q\in U$ for some  open subset $U\subset \mathbb{S}^1$ containing $q_0$, and then if we choose a smooth function $\psi$ on $\mathbb{S}^1$ which is nonnegative, compactly supported in $U$ and positive at $q_0$, we obtain that $\int_{S} \psi G(\omega)\omega_{TV}^2>0$, contradicting \eqref{necess}.
\end{proof}

The following result will be crucial to us.
\begin{lem}\label{lem:ostruction-slf}
Every Gauduchon metric on $S$ satisfies \eqref{necess}, and there exist (non-Gauduchon) Hermitian metrics on $S$ which do not satisfy \eqref{necess}.
\end{lem}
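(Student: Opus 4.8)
The plan is to prove the two halves of Lemma \ref{lem:ostruction-slf} separately. For the first half, I want to show that the Gauduchon condition forces \eqref{necessa}, which by Lemma \ref{lem:obstr2} is equivalent to \eqref{necess}. By the explicit formula \eqref{g} for $G(\omega)$, we have, up to a harmless positive normalizing constant,
\[
 \int_{\pi^{-1}(y_2)} G(\omega)\,\mathrm{vol}_{\omega_{TV}\lfloor\pi^{-1}(y_2)}
 = -\tfrac{1}{8}\int_{\pi^{-1}(y_2)} r\,\mathrm{vol}_{\omega_{TV}\lfloor\pi^{-1}(y_2)}
   + \tfrac{1}{8}\,\frac{\int_S r\,\omega_{TV}^2}{\int_S\omega_{TV}^2}\int_{\pi^{-1}(y_2)}\mathrm{vol}_{\omega_{TV}\lfloor\pi^{-1}(y_2)} .
\]
The first term on the right is, up to sign and the factor $\tfrac18$, exactly the quantity $R(y_2)$ appearing in Lemmas \ref{lem:gaud-sm} and \ref{lem:gauduchon-s+-}, and those lemmas say that for a Gauduchon metric $R$ is independent of $y_2$. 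The second term is also manifestly independent of $y_2$ (the fiber volume $\int_{\pi^{-1}(y_2)}\mathrm{vol}_{\omega_{TV}\lfloor\pi^{-1}(y_2)}$ is a constant for the Tricerri/Vaisman-Tricerri metric, as one reads off from its explicit invariant form). So the fiber integral of $G(\omega)$ is a constant $C$ in $y_2$; and then integrating that constant over $\mathbb{S}^1$ against $\tfrac{dy_2}{y_2}$ and using $\int_S G(\omega)\,\omega_{TV}^2 = 0$ (which is immediate from the definition of $G(\omega)$: the two terms cancel after integration over $S$) forces $C = 0$, giving \eqref{necessa}.

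For the second half, I need to exhibit a Hermitian metric that violates \eqref{necess}. The cleanest way is to engineer one directly: take the Tricerri/Vaisman-Tricerri metric $\omega_{TV}$ itself (written in the form \eqref{eq:generic-metric} with $r\equiv 1$) and perturb only the $\varphi^1\wedge\bar\varphi^1$ coefficient, setting $r = 1 + \varepsilon\,\chi(y_2)$ for a small $\varepsilon>0$ and a smooth nonconstant function $\chi$ on $\mathbb{S}^1$, leaving $s$ and $u$ unchanged. For $\varepsilon$ small this stays a positive Hermitian metric. Then $\int_{\pi^{-1}(y_2)} G(\omega)\,\mathrm{vol}_{\omega_{TV}\lfloor\pi^{-1}(y_2)}$ is, up to the positive fiber-volume constant and the factor $\tfrac18$, equal to $-\varepsilon\bigl(\chi(y_2) - \dashint_{\mathbb{S}^1}\chi\bigr)$, which is not identically zero since $\chi$ is nonconstant. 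By Lemma \ref{lem:obstr2} (its contrapositive, exactly as in the proof of that lemma) \eqref{necess} fails for a suitable $\psi\in\ker\Delta_{\mathcal D}$ — concretely one may take $\psi = \chi$ itself, or a bump function localized where $\chi - \dashint\chi \neq 0$.

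I do not expect a serious obstacle here; the work has essentially been done in Sections \ref{sec:gauduchon} and in Lemmas \ref{lem:obstr}--\ref{lem:obstr2}. The only points requiring a small amount of care are: first, checking that $\int_{\pi^{-1}(y_2)}\mathrm{vol}_{\omega_{TV}\lfloor\pi^{-1}(y_2)}$ is genuinely $y_2$-independent (this follows because $\omega_{TV} = \sqrt{-1}\varphi^1\wedge\bar\varphi^1 + \sqrt{-1}\varphi^2\wedge\bar\varphi^2$ with $\varphi^1,\varphi^2$ invariant, so the leafwise volume form $\sqrt{-1}\varphi^1\wedge\bar\varphi^1$ restricted to the fiber is the invariant volume and integrates to a constant); and second, confirming that the perturbed $\omega$ in the second half is honestly a metric and not accidentally Gauduchon — but it cannot be Gauduchon, precisely because Lemma \ref{lem:gaud-sm}/\ref{lem:gauduchon-s+-} would then force $\int_{\pi^{-1}(y_2)} r\,\mathrm{vol}_{\omega_{TV}\lfloor\pi^{-1}(y_2)}$ to be constant, contradicting the nonconstancy of $\chi$. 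This dovetails nicely: the same computation proves both halves, one as an implication and one as its failure.
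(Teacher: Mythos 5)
Your proof is correct and follows essentially the same line as the paper's: for the first half, both reduce to the constancy of $R(y_2)$ via Lemmas~\ref{lem:gaud-sm} and~\ref{lem:gauduchon-s+-} together with the formula~\eqref{g} and the vanishing of the total integral of $G(\omega)$; for the second half, both take $r$ to be a nonconstant function of $y_2$ alone with $s\equiv 1$, $u\equiv 0$ (the paper tests directly against $\psi=r$ and invokes Cauchy–Schwarz, while you argue via the fiber-integral criterion of Lemma~\ref{lem:obstr2}, a cosmetic difference). The only point the paper leaves implicit that you spell out is that the fiber $\omega_{TV}$-volume is $y_2$-independent, which you verify correctly.
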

\begin{proof}
Given any Gauduchon metric $\omega$ on $S$ as in \eqref{eq:generic-metric}, the associated function $r$ on $S$ satisfies that
$$ R(y_2) := \int_{\pi^{-1}(y_2)} r \, \mathrm{vol}_{\omega_{TV}\lfloor \pi^{-1}(y_2)} $$
is a constant function of $y_2\in \mathbb{S}^1$, thanks to Lemmas \ref{lem:gaud-sm} and \ref{lem:gauduchon-s+-}. Thanks to \eqref{g}, this implies that
\begin{eqnarray}\label{necess2}
\int_{\pi^{-1}(y_2)} G(\omega) \, \mathrm{vol}_{\omega_{TV}\lfloor \pi^{-1}(y_2)}
&=& -\frac{1}{8}\cdot R(y_2) + \frac{1}{8} \cdot \dashint_S R(y_2) \cdot \omega_{TV}^2 = 0 ,
\end{eqnarray}
for all $y_2\in \mathbb{S}^1$.
The first statement thus follows from Lemma \ref{lem:obstr2}.

\smallskip

For the second statement, choose any smooth positive nonconstant function $r\colon \mathbb{S}^1\to\mathbb{R}$, and define
$$\omega:=\sqrt{-1}r\, \varphi^1\wedge\bar\varphi^1 + \sqrt{-1}\, \varphi^2\wedge\bar\varphi^2.$$
Then if we choose $\psi=r\in \ker\Delta_{\mathcal{D}}$, we have
$$\int_{S} \psi G(\omega)\omega_{TV}^2 = -\frac{1}{8} \int_{S}r^2\omega_{TV}^2+\frac{1}{8}\frac{\left(\int_{S}r\omega_{TV}^2\right)^2}{\int_{S}\omega_{TV}^2}<0,$$
because $r$ is nonconstant.
\end{proof}

\subsection{Strongly leafwise flat forms on Inoue-Bombieri surfaces \texorpdfstring{$S_M$}{SM}}
In this section, we explicitly solve equation \eqref{eq:Deltau=G} on $S_M$, so proving Theorem \ref{thm:main-thm} for the case of Inoue-Bombieri surfaces of type $S_M$.

\begin{thm}\label{main2}
Let $\omega$ be a Hermitian metric on $S_M$ which satisfies \eqref{necess}. Then there is a smooth function $u$ such that $\omega+\sqrt{-1}\,\partial\overline\partial u$ is a strongly leafwise flat $(1,1)$-form.
\end{thm}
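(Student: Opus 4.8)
The plan is to reduce the problem to the scalar PDE $\Delta_{\mathcal D}u=G(\omega)$, which in the case $S_M$ reads $\tfrac{1}{8y_2}\partial_{z\bar z}u=G(\omega)$, and to solve it by Fourier expansion along the $\mathbb T^3$-fibres. Writing $\omega$ in the form \eqref{eq:generic-metric}, by \eqref{g} the right-hand side $G(\omega)=-\tfrac18 r+\tfrac18\dashint_S r\,\omega_{TV}^2$ depends only on $r$, and the normalization is chosen so that the fibre-averages match. First I would lift everything to the covering $\tilde X\simeq \mathbb T^3\times\R^{>0}$ and expand $u$ and $G(\omega)$ in a Fourier series in the three torus directions $x_1,y_1,x_2$ (equivalently, in the periods $m_1,m_2,m_3,\ell_1,\ell_2,\ell_3$ of $\Gamma'$), with coefficients that are smooth functions of $y_2\in\R^{>0}$; the $f_0$-equivariance then imposes the compatibility $\hat u_{\mathbf k}(\lambda y_2)$ relates to $\hat u_{M^t\mathbf k}(y_2)$ up to the action of $\mu$ on the $z$-variable. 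On each nonzero frequency, $\partial_{z\bar z}$ acts as multiplication by a nonzero constant $-\,c_{\mathbf k}\,y_2$ (after accounting for the $\sqrt{y_2}$'s in $\varphi^1$), so one simply divides: $\hat u_{\mathbf k}=-8y_2\,\widehat{G(\omega)}_{\mathbf k}/c_{\mathbf k}$; the zero-frequency mode is exactly the obstruction, killed by hypothesis \eqref{necess} via Lemma \ref{lem:obstr2}.

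The second step is to show that the formal solution thus produced is genuinely a smooth function on $S_M$, i.e. that the Fourier series converges with all its derivatives. Here the key point is a lower bound on the small divisors $|c_{\mathbf k}|$: these are of the shape $|k_1\,\xi+k_2\,\zeta+\dots|$ for integers $\mathbf k$ and fixed \emph{algebraic} irrationalities $\xi,\zeta$ built from the ratios $\ell_i/\ell_j$ and from $\mu$. By Liouville's theorem on rational approximation of algebraic numbers, such a quantity cannot be smaller than $C|\mathbf k|^{-N}$ for some fixed $N$; hence the division by $c_{\mathbf k}$ costs only a polynomial loss, which is absorbed by the rapid (faster than any polynomial) decay of $\widehat{G(\omega)}_{\mathbf k}$ coming from the smoothness of $\omega$. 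Care is needed because $x_2$ appears inside a fibration whose monodromy is $\lambda$, so the "frequency" in the $x_2$-direction is itself $y_2$-dependent; I would handle this by working on the mapping-torus model $\mathbb T^3\times[1,\lambda]$ and tracking how $\Psi$ permutes frequencies, using that $\lambda,\mu$ are units satisfying $\lambda|\mu|^2=1$ to see that the estimates are uniform over the compact $y_2$-interval.

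Once smoothness is established, $\omega+\sqrt{-1}\,\partial\overline\partial u$ is a real $(1,1)$-form with $(\omega+\sqrt{-1}\,\partial\overline\partial u)\wedge\alpha=c\,\omega_{TV}^2$ for the constant $c=\dashint_S\omega\wedge\alpha/\dashint_S\omega_{TV}^2>0$, which is precisely the definition of strongly leafwise flat; note we do not need positivity of the form, only the wedge-with-$\alpha$ identity.

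\emph{Main obstacle.} The delicate point is the simultaneous smallness control of the divisors $c_{\mathbf k}$ together with the $y_2$-dependence introduced by the non-trivial monodromy along $\mathbb S^1$: one must arrange the Fourier analysis so that the Liouville bound applies uniformly, and verify that the relevant constants are indeed algebraic numbers (not merely irrational) — this is why the paper emphasizes that $\lambda$ is a root of a monic integer polynomial with unit constant term and that the ratios $\ell_i/\ell_j$ are algebraic. Establishing the polynomial lower bound $|c_{\mathbf k}|\ge C|\mathbf k|^{-N}$ in the correct multi-index setting, and checking that it survives the twisting by $f_0$, is where the real work lies; the rest is bookkeeping.
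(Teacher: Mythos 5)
Your proposal follows essentially the same route as the paper: reduce to the leafwise Poisson equation $\Delta_{\mathcal D}u=G(\omega)$, Fourier-expand on the $\mathbb T^3$-fibres, kill the zero mode using \eqref{necess} via Lemma~\ref{lem:obstr2}, divide by the symbol, and control the small divisors with Liouville's theorem before checking descent from the mapping torus to $S_M$. Two small corrections to your sketch. First, for $S_M$ the fibre $\pi^{-1}(y_2)$ is the \emph{fixed} torus $\mathbb{R}^3/\Lambda$, with $\Lambda$ spanned by the constant vectors $(\Re m_j,\Im m_j,\ell_j)$, so the Fourier frequencies do not depend on $y_2$; the only $y_2$-dependence is the harmless coefficient $y_2$ in the leafwise Laplacian. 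The genuine $y_2$-dependence you anticipate is a feature of the $S^\pm$ case (Theorem~\ref{main3}), not of $S_M$. Second, for the descent you need not track how $\Psi$ permutes frequencies: once you have a smooth solution $u$ on $\mathbb T^3\times[1,\lambda]$, both $u(\cdot,1)$ and $u(\Psi(\cdot),\lambda)$ solve the same leafwise Poisson equation (since $g$ is $f_0$-equivariant and $\Psi$ scales the Laplacian by $|\mu|^2=\lambda^{-1}$, matching the $1/y_2$ factor), so they differ by a constant, which vanishes by the zero fibre-integral normalization; this is simpler than the frequency-matching route. Finally, the symbol you are dividing by is actually a quadratic form $z_k$ in the integer frequency $k$, not a linear one, but the paper reduces the lower bound on $z_k$ to exactly the kind of linear small-divisor estimate $|k_2-(\ell_2/\ell_1)k_1|$ you describe, because the kernel of that quadratic form is spanned by $(\ell_1,\ell_2,\ell_3)$; and only the ratios $\ell_i/\ell_j$ enter (not $\mu$).
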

\begin{proof}
Recall that there is a quotient map $p:\mathbb{T}^3\times [1,\lambda]\to S_M$, which identifies $(q,1)$ with $(\Psi(q),\lambda)$ for a certain diffeomorphism $\Psi$ of $\mathbb{T}^3$ induced by $f_0$.
Here $\mathbb{T}^3=\mathbb{R}^3/\Lambda$, where we use the coordinates $x_1,y_1,x_2$ on $\mathbb{R}^3$, and the lattice $\Lambda$ is spanned by $\partial_1$, $\partial_2$, $\partial_3$ which are given by
 \begin{equation}\label{eq:delta}
    \partial_1 :=
    \left(\begin{array}{c}\Re m_1\\ \Im m_1\\ \ell_1 \end{array}\right) , \qquad
    \partial_2 :=
    \left(\begin{array}{c}\Re m_2\\ \Im m_2\\ \ell_2 \end{array}\right) , \qquad
    \partial_3 :=
    \left(\begin{array}{c}\Re m_3\\ \Im m_3\\ \ell_3 \end{array}\right) ,
 \end{equation}
Pulling back $G(\omega)$ via $p$ we obtain a smooth function $g$ on $\mathbb{T}^3\times [1,\lambda]$ which thanks to \eqref{necessa} (which is equivalent to \eqref{necess} by Lemma \ref{lem:obstr2}) satisfies
\begin{equation}\label{avg}
\int_{\mathbb{T}^3\times \{y_2\}} g dx_1\wedge dy_1\wedge dx_2=0,
\end{equation}
for all $y_2\in [1,\lambda]$. The equation we wish to solve pulls back to the PDE
\begin{equation}\label{eq:Deltau=G2}
\frac{1}{32y_2}\left(\frac{\partial^2u}{\partial x_1^2}+\frac{\partial^2u}{\partial y_1^2}\right)=g,
\end{equation}
on $\mathbb{T}^3\times [1,\lambda]$. Since the differential operators $\frac{\partial}{\partial x_1}$ and $\frac{\partial}{\partial y_1}$ are in the $\mathbb{T}^3$ direction, it is natural to try to solve this PDE by using the Fourier  series expansion on each $\mathbb{T}^3\times \{y_2\}$ separately.
We write $\frac{\partial}{\partial x_1}$ and $\frac{\partial}{\partial y_1}$  with respect to the basis $\{\partial_1, \partial_2, \partial_3\}$ as
$$ \frac{\partial}{\partial x_1} = A_1\,\partial_1+A_2\,\partial_2+A_3\,\partial_3
   \qquad \text{ and } \qquad
   \frac{\partial}{\partial y_1} = B_1\,\partial_1+B_2\,\partial_2+B_3\,\partial_3 , $$
where
\begin{eqnarray*}
\left(\begin{array}{cc}A_1&B_1\\A_2&B_2\\A_3&B_3\end{array}\right) &=&
\left(\begin{array}{ccc}\Re m_1&\Re m_2&\Re m_3\\\Im m_1&\Im m_2&\Im m_3\\\ell_1&\ell_2&\ell_3\end{array}\right)^{-1}
\cdot \left(\begin{array}{cc}1&0\\0&1\\0&0\end{array}\right) \\
&=& \frac{1}{\varepsilon} \cdot \left(\begin{array}{ccc}
-\Im m_3\ell_2+\Im m_2\ell_3 & -\ell_3 \Re m_2+\ell_2\Re m_3 \\
\Im m_3\ell_1-\Im m_1\ell_3 & \ell_3\Re m_1-\ell_1\Re m_3 \\
-\Im m_2\ell_1+\Im m_1\ell_2 & -\ell_2\Re m_1+\ell_1\Re m_2
\end{array}\right) ,
\end{eqnarray*}
where
\begin{eqnarray*}
\varepsilon&=& \det \left(\begin{array}{ccc}\Re m_1&\Re m_2&\Re m_3\\\Im m_1&\Im m_2&\Im m_3\\\ell_1&\ell_2&\ell_3\end{array}\right) \\
&=& (-\Im m_3\ell_2+\Im m_2\ell_3)\Re m_1-(-\Im m_3\ell_1+\Im m_1\ell_3)\Re m_2\\
&&+(-\Im m_2\ell_1+\Im m_1\ell_2)\Re m_3.
\end{eqnarray*}

We compute the Laplacian along the leaves in terms of the above basis:
\begin{eqnarray*}
\Delta_{\mathcal D} &=& \frac{1}{8y_2}\cdot\frac{\partial^2}{\partial z\partial\bar z} = \frac{1}{32y_2} \left( \frac{\partial^2}{\partial x_1\partial x_1}+\frac{\partial^2}{\partial y_1\partial y_1} \right) \\
&=& \frac{1}{32 y_2} \left( \left(\sum_j A_j\partial_j\right)^2 + \left(\sum_j B_j\partial_j\right)^2 \right) \\
&=& \frac{1}{32 y_2} \left( \sum_j \left(A_j^2+B_j^2\right) \partial_{jj} + 2\,\sum_{j<h} \left(A_jA_h+B_jB_h\right) \partial_{jh} \right) \\
&=& \frac{1}{32y_2} \cdot \left(\begin{array}{ccc}\partial_1&\partial_2&\partial_3\end{array}\right) \cdot Z \cdot \left(\begin{array}{c}\partial_1\\\partial_2\\\partial_3\end{array}\right) ,
\end{eqnarray*}
where
\begin{eqnarray*}
Z &=& \left(\begin{array}{ccc}
A_{1}^{2} + B_{1}^{2} & A_{1} A_{2} + B_{1} B_{2} & A_{1} A_{3} + B_{1} B_{3} \\
A_{1} A_{2} + B_{1} B_{2} & A_{2}^{2} + B_{2}^{2} & A_{2} A_{3} + B_{2} B_{3} \\
A_{1} A_{3} + B_{1} B_{3} & A_{2} A_{3} + B_{2} B_{3} & A_{3}^{2} + B_{3}^{2}
\end{array}\right) \\
&=& \left(\begin{array}{rrr}
A_{1} & A_{2} & A_{3} \\
B_{1} & B_{2} & B_{3}
\end{array}\right)^t \cdot \left(\begin{array}{rrr}
A_{1} & A_{2} & A_{3} \\
B_{1} & B_{2} & B_{3}
\end{array}\right) ,
\end{eqnarray*}
which is semipositive definite of rank $2$, whose kernel is generated by the vector
\begin{equation}\label{boh}
 \left( \begin{array}{c} A_3B_2 - A_2B_3 \\ - A_3B_1 + A_1B_3 \\ A_2B_1 - A_1B_2 \end{array}\right)=-\frac{1}{\varepsilon}\left(\begin{array}{c}\ell_{1}\\\ell_{2}\\\ell_{3}\end{array}\right),
 \end{equation}
where the second equality follows by direct inspection.

We solve equation \eqref{eq:Deltau=G2} by considering the Fourier series expansion.
More precisely, consider the Fourier series of the datum in terms of the above basis for the torus:
$$ g(t,y_2) = \sum_{k\in\Z^3\backslash\{0\}} d_k(y_2) \cdot \exp\left(2\pi\sqrt{-1}\left\langle k|t\right\rangle\right) , $$
where $k=(k_1,k_2,k_3)$, and $t=(t_1,t_2,t_3)$ are the coordinates in the basis $(\partial_1,\partial_2,\partial_3)$, and $\left\langle k|t\right\rangle=k_1t_1+k_2t_2+k_3t_3$. The zero Fourier mode does not appear because of \eqref{avg}, while the other Fourier coefficients $d_k(y_2)\in\mathbb{C}$ depend smoothly on $y_2$.
Consider the Fourier series of the expected solution:
\begin{equation}\label{sol}
 u(t,y_2) = \sum_{k\in\Z^3\backslash\{0\}} a_k(y_2) \cdot \exp\left(2\pi\sqrt{-1}\left\langle k|t\right\rangle\right) .
 \end{equation}
Equation \eqref{eq:Deltau=G2} is equivalent to, for any $k\in\Z^3\backslash\{0\}$,
$$ a_k(y_2) = \left(2\pi\sqrt{-1}\right)^{-2} \cdot \frac{32y_2\cdot d_k(y_2)}{z_k} $$
where
$$ z_k := \left(\begin{array}{ccc}k_1&k_2&k_3\end{array}\right) \cdot 	Z \cdot \left(\begin{array}{c}k_1\\k_2\\k_3\end{array}\right) .
$$
For this to make sense, we need to show that $z_k\neq 0$ for all $k\in \Z^3\backslash\{0\}$, or equivalently that the kernel of the $3\times 3$ semipositive definite matrix $Z$
meets $\mathbb{Z}^3$ in $\{(0,0,0)\}$ only. As we said above, the kernel of $Z$ is $1$-dimensional spanned by the vector
$$v=\left(\begin{array}{c}\ell_{1}\\\ell_{2}\\\ell_{3}\end{array}\right).  $$
If we have $\nu v\in\mathbb{Z}^3$ for some $\nu\in\mathbb{R}$, then $(\nu\ell_1,\nu\ell_2,\nu\ell_3)\in \mathbb{Z}^3$ and the relations
$$\lambda\nu\ell_j=\sum_{k=1}^3M_{jk}\nu\ell_k,$$
where $M_{jk}\in\mathbb{Z}$, imply that $\nu=0$. Therefore we have shown that $z_k\neq 0$ for all $k\in \Z^3\backslash\{0\}$, and so \eqref{sol} defines a distributional solution $u$ of \eqref{eq:Deltau=G2} on $\mathbb{T}^3\times\{y_2\}$ for all $y_2$. Recall the well-known fact that a Fourier series on a torus of the form \eqref{sol} defines a smooth function if and only if its Fourier coefficients $a_k$ decay faster than any power $|k|^{-N},N>0,$ as $|k|\to\infty$. In particular, since our datum $g$ is smooth, the coefficients $d_k(y_2)$ satisfy this decay property, \dan{for any fixed $y_2$}. It follows that, exactly as in \cite{GW}, to show smoothness of $u(\cdot,y_2)$ on each fiber $\mathbb{T}^3\times\{y_2\}$, it is sufficient to show that
\begin{equation}\label{need}
|z_k|\geq \frac{C}{(k_1^2+k_2^2+k_3^2)^M},
\end{equation}
for some $C,M>0$ and for all $|k_1|, |k_2|, |k_3|$ sufficiently large.

To see this, we use the Liouville theorem \dan{on Diophantine approximation}:
if $x\in\mathbb{R}$ is irrational and algebraic of degree $d>1$, then there is a constant $C>0$ such that for all integers \dan{$(p,q)$, with $q>1$}, we have
\begin{equation}\label{eq:liouville-thm}
\left|x-\frac{p}{q}\right|\geq \frac{C}{q^d}.
\end{equation}
\dan{The property \eqref{eq:liouville-thm} says that $x$ is a {\em non-Liouville number}, or {\em strongly-dispersive} in the terminology of \cite{otiman-toma}.}

To deduce \eqref{need} from Liouville's Theorem, first note that $|z_k|$ is comparable to the square of the distance from the point $(k_1,k_2,k_3)$ to the real line spanned by $v$ (since the other two eigenvalues of $Z$ are positive). Recall that at least one of the ratios $\{\ell_i/\ell_j\}_{i\neq j}$ is well-defined and is irrational, and we may assume that it is $\ell_2/\ell_1$. Then this distance from $(k_1,k_2,k_3)$ to the span of $v$ is larger than or equal to the distance from $(k_1,k_2)$ to the line spanned by $(\ell_1,\ell_2)$, which up to a fixed constant equals
$$\left|k_2-\frac{\ell_2}{\ell_1}k_1\right|=|k_1|\left|\frac{k_2}{k_1}-\frac{\ell_2}{\ell_1}\right|.$$
But the ratio $\ell_2/\ell_1$ is an irrational algebraic number (of degree say $d$), and so Liouville's Theorem shows that
$$\left|\frac{k_2}{k_1}-\frac{\ell_2}{\ell_1}\right|\geq \frac{C}{|k_1|^d},$$
for a fixed constant $C>0$ and so we have shown that
$$|z_k|\geq \frac{C'}{k_1^{2(d-1)}}\geq \frac{C'}{(k_1^2+k_2^2+k_3^2)^{d-1}},$$
as claimed.

We have thus proved smoothness of $u$ on each $\mathbb{T}^3$ fiber, and smoothness in the variable $y_2$ now follows from the smoothness of the Fourier coefficients $a_k(y_2)$. We have thus obtained a smooth solution $u$ of \eqref{eq:Deltau=G2} on $\mathbb{T}^3\times[1,\lambda]$. Next we show that the solution $u$ descends to a smooth function on $S_M$, or equivalently that
$u(p,1)=u(\Psi(p),\lambda)$ for all $p\in\mathbb{T}^3$, where recall that $\Psi$ is induced by $f_0(z,w)=(\mu z,\lambda w)$. Of course the function $g$ on the right hand side of \eqref{eq:Deltau=G2} has this property. Let
$\tilde{u}(p,1)=u(\Psi(p),\lambda)$, which is a smooth function on $\mathbb{T}^3\times\{1\}$ which satisfies (writing $\mu=\mu_1+\sqrt{-1}\mu_2$ and $p=(x_1,y_1,x_2)$)
\begin{eqnarray*}
\Delta_{\mathcal{D}}\tilde{u}(p,1)&=&\frac{1}{32}\,\left(\frac{\partial^2}{\partial x_1^2}+\frac{\partial^2}{\partial y_1^2}\right)\left(u(\mu_1x_1-\mu_2y_1,\mu_1 y_1+\mu_2x_1,\lambda x_2)\right)\\
&=&|\mu|^2\,\frac{1}{16}\,\left(\frac{\partial^2u}{\partial x_1^2}+\frac{\partial^2u}{\partial y_1^2}\right)(\mu_1x_1-\mu_2y_1,\mu_1 y_1+\mu_2x_1,\lambda x_2)\\
&=&\frac{1}{16\,\lambda}\left(\frac{\partial^2u}{\partial x_1^2}+\frac{\partial^2u}{\partial y_1^2}\right)(\mu_1x_1-\mu_2y_1,\mu_1 y_1+\mu_2x_1,\lambda x_2)\\
&=&g(\Psi(p),\lambda) = g(p,1) =
\Delta_{\mathcal{D}}u(p,1),
\end{eqnarray*}
and hence $u(\cdot,1)$ and $\tilde{u}(\cdot,1)$ differ by a constant, which is in fact zero since both functions have integral zero on the torus. Therefore $u$ descends to a smooth solution of \eqref{eq:Deltau=G} on $S_M$, as desired.
\end{proof}

\subsection{Strongly leafwise flat forms on Inoue-Bombieri surfaces \texorpdfstring{$S^\pm$}{S+-}}
In this section, we explicitly solve equation \eqref{eq:Deltau=G} on $S^+$, proving Theorem \ref{thm:main-thm} for the remaining case of Inoue-Bombieri surfaces of type $S^\pm$.

\begin{thm}\label{main3}
Let $\omega$ be a Hermitian metric on $S^\pm$ which satisfies \eqref{necess}. Then there is a smooth function $u$ such that $\omega+\sqrt{-1}\,\partial\overline\partial u$ is a strongly leafwise flat $(1,1)$-form.
\end{thm}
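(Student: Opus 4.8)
The plan is to mimic the proof of Theorem~\ref{main2}, with the torus fibre of $S_M$ replaced by the $3$-dimensional Heisenberg nilmanifold fibre of $S^{\pm}$. The one genuinely new point is that Fourier analysis is no longer available on the whole fibre, so I would carry out a \emph{partial} Fourier expansion along the central circle of the nilmanifold, and then analyse the line bundles over the resulting ``base'' $2$-torus that the remaining coefficients define.

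First I would lift equation~\eqref{eq:Deltau=G} to $\tilde X=(\C\times\mathbb H)/\Gamma'$, which by \eqref{quot} is the quotient of $H(3;\R)\times\R^{>0}$ by $\Gamma''=\langle g_1,g_2,g_3\rangle$, with $S^{+}=\tilde X/\langle f_0\rangle$; in the coordinates $(x_1,y_1,x_2,y_2)$ the equation becomes
$$
\frac{1}{32}\left(\frac{\partial^2u}{\partial x_1^2}+\frac{\partial^2u}{\partial y_1^2}\right)=g,
$$
where $g$ is the smooth, $\Gamma''$- and $f_0$-invariant pullback of $G(\omega)$, with $\int_{\pi^{-1}(y_2)}g=0$ for all $y_2$ by \eqref{necessa} (equivalent to \eqref{necess} via Lemma~\ref{lem:obstr2}). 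Notice that, unlike for $S_M$, there is no factor of $y_2$ in front: on $S^{\pm}$ one has $\Delta_{\mathcal D}=\tfrac18\partial_{z\bar z}$, and $f_0(z,w)=(z+\mathbf t,\gamma w)$ acts on the leaves by translation — this will make both the dependence on $y_2$ and the final descent step cleaner than in Theorem~\ref{main2}. Since $g_3$ acts by $x_1\mapsto x_1+c_3$, I would expand $u=\sum_{n\in\Z}u_n\,e^{2\pi\sqrt{-1}\,nx_1/c_3}$ and $g=\sum_{n\in\Z}g_n\,e^{2\pi\sqrt{-1}\,nx_1/c_3}$; the relations imposed by $g_1,g_2$ become $u_n(y_1+b_jy_2,x_2+a_j)=e^{-2\pi\sqrt{-1}\,n(c_j+b_jx_2)/c_3}\,u_n(y_1,x_2)$, so for $n\ne0$ each $u_n(\cdot,\cdot,y_2)$ is a section of a Hermitian line bundle $\mathcal L_{n,y_2}$ over the $2$-torus $\mathbb T^2_{y_2}:=\R^2/\langle(b_1y_2,a_1),(b_2y_2,a_2)\rangle$ (a genuine lattice, since $(a_1,a_2)$ and $(b_1,b_2)$ are eigenvectors of $N$ for different eigenvalues), whereas $u_0(\cdot,\cdot,y_2)$ is an ordinary function on $\mathbb T^2_{y_2}$. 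The PDE then decouples into $\tfrac1{32}\big(\partial_{y_1}^2-\tfrac{4\pi^2n^2}{c_3^2}\big)u_n=g_n$ for each $n$.

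For $n\ne0$ the operator $\partial_{y_1}^2-\tfrac{4\pi^2n^2}{c_3^2}$ is negative definite on $L^2(\mathcal L_{n,y_2})$, with inverse of norm $\le(c_3/2\pi n)^2$; moreover, by the representation theory of the Heisenberg nilmanifold (theta functions), the smooth sections of $\mathcal L_{n,y_2}$ form finitely many copies of Schwartz space on which $\partial_{y_1}$ is conjugate to the standard derivative, so that the equation reduces to an elementary one-dimensional ordinary differential equation, solved explicitly, yielding a smooth $u_n$ whose Schwartz seminorms are bounded by $Cn^{-2}$ times those of $g_n$, uniformly in $y_2\in[1,\gamma]$. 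Since $g$ is smooth, the $g_n$ decay faster than any power of $n$, so $\sum_{n\ne0}u_n\,e^{2\pi\sqrt{-1}\,nx_1/c_3}$ is smooth. I expect the mode $n=0$ to be the crux, exactly as in Theorem~\ref{main2}: here $u_0$ solves $\partial_{y_1}^2u_0=32g_0$ on $\mathbb T^2_{y_2}$, and in the Fourier basis indexed by the dual lattice the eigenvalue of $\partial_{y_1}^2$ on the mode labelled $(p,q)\in\Z^2$ is $-4\pi^2\ell_1^2$ with $\ell_1=(pa_2-qa_1)/(y_2(b_1a_2-b_2a_1))$; this is nonzero for $(p,q)\ne(0,0)$ because the slope of $(a_1,a_2)$ is irrational (Remark~\ref{irraz}), the solvability condition $\int_{\mathbb T^2_{y_2}}g_0=0$ is precisely \eqref{necessa}, and Liouville's theorem~\eqref{eq:liouville-thm} applied to the algebraic number $a_2/a_1$ (of degree $d$, say) gives $|\ell_1|\ge C(p^2+q^2)^{-(d-1)/2}$ uniformly in $y_2\in[1,\gamma]$ — the small-divisor bound, analogous to \eqref{need}, forcing $u_0$ to be smooth.

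It remains to check that $u=\sum_n u_n\,e^{2\pi\sqrt{-1}\,nx_1/c_3}$ is smooth in $y_2$ as well (the solution operators above are $y_2$-independent and act continuously on the smooth $y_2$-family of data, with bounds uniform on $[1,\gamma]$), and that $u$ descends from $\tilde X$ to $S^{+}=\tilde X/\langle f_0\rangle$: since $f_0$ acts on the leaves by translation, $u(\cdot,1)$ and $u(\Psi(\cdot),\gamma)$ solve the same equation on the fibre, whose $\Delta_{\mathcal D}$-kernel consists only of constants (the $n\ne0$ operators being invertible and $\partial_{y_1}^2$ having only constant kernel on $\mathbb T^2_{y_2}$), so the two differ by a constant, which vanishes after matching integrals — the argument here being simpler than the conformal cancellation used in Theorem~\ref{main2} because there is no $y_2$-factor. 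Finally, the case $S^{-}$ follows by lifting to its unramified double cover of type $S^{+}$, solving there, and averaging over the deck involution (which preserves both \eqref{necess} and the equation). All told, the main obstacle is the Diophantine small-divisor estimate for the mode $n=0$ — the same phenomenon as in Theorem~\ref{main2}, now governed by the irrational algebraic slope of an eigenvector of $N$; the extra, ``nilpotent'' directions are comparatively harmless because of the mass term $n^2$ present in the modes $n\ne0$.
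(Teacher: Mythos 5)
Your proposal is correct in outline and matches the paper's broad strategy (reduce $S^-$ to $S^+$ by the double cover and deck averaging; expand partially in $x_1$ along the centre of the Heisenberg fibre; treat the mode $n=0$ by further Fourier analysis on $\mathbb{T}^2$ plus the Liouville estimate on the irrational algebraic slope $a_1/a_2$; descend to $S^+$ by noting that $f_0$ acts by translation in $z$, so $\Delta_{\mathcal D}$ commutes with it and the ambiguity is a constant killed by the zero-mean normalization). Where you genuinely diverge from the paper is in the treatment of the modes $n\neq 0$. You propose to regard each $u_n(\cdot,\cdot,y_2)$ as a section of a degree-$\pm nr$ line bundle on the slanted torus $\mathbb{T}^2_{y_2}$ and then invoke the Weil--Brezin/theta-function model of the Heisenberg representation, reducing $\partial_{y_1}^2-4\pi^2n^2/c_3^2$ to a massive one-dimensional operator on Schwartz space. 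The paper instead works ``upstairs'' on $\mathbb{R}^2$: for each fixed $x_2\in\mathbb{R}$ it solves the ODE $\partial_{y_1}^2u_k-(2\pi k/c_3)^2 u_k=g_k$ on the whole real line using Coppel's exponential-dichotomy existence/uniqueness theorem for bounded solutions, derives the periodicity of $u_k$ automatically from uniqueness, and obtains the $n^{-2}$ decay of $\sup|u_k|$ (and of all derivatives, by differentiating the ODE) from a two-line maximum-principle argument. Both routes exploit the same phenomenon — the mass term $n^2$ produces a spectral gap — but the paper's version is more elementary and gives the uniformity in $y_2\in[1,\gamma]$ and the derivative bounds directly, without having to track how the Schwartz model and the transition functions of $\mathcal L_{n,y_2}$ vary with $y_2$. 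Your sketch is a bit compressed at exactly this point: the claim that $\partial_{y_1}$ is conjugate to ``the standard derivative'' is polarization-dependent (it could equally become multiplication by a linear function of $t$ after an $n$-dependent rescaling), and the passage from an $L^2$ spectral-gap bound to uniform control of all Schwartz seminorms, uniformly in $y_2$, would need to be spelled out; none of this is wrong, but it is where the real work hides. It is also worth noting that, contrary to what you wrote, the solution operators for the $n=0$ mode are \emph{not} $y_2$-independent — the lattice $\Lambda_{y_2}$ and hence the dual Fourier frequencies scale with $y_2$ — but since $y_2$ ranges over the compact interval $[1,\gamma]$ this is harmless, exactly as you say elsewhere.
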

\begin{proof}
As we mentioned earlier, it suffices to treat the case of $S^+$, since the case of $S^-$ reduces to $S^+$ by using the double cover. Indeed, the covering involution $\iota$ of $S^+$ (such that $S^-=S^+/\iota$) is of the form $\iota(z,w)=(\sqrt{\gamma}z,-w)$, see {\itshape e.g.} \cite[page 2130]{tosatti-weinkove-comp}, and this preserves the foliation. Pulling back $\omega$ to $S^+$ gives an $\iota$-invariant Hermitian metric $\hat{\omega}$ on $S^+$ which still satisfies \eqref{necess}, so we will find $u$ such that $\hat{\omega}+\sqrt{-1}\,\partial\overline\partial u$ is strongly leafwise flat. Therefore, so is $\hat{\omega}+\sqrt{-1}\,\partial\overline\partial \left(\frac{u+\iota^*u}{2}\right),$ which now descends to the desired strongly leafwise flat form on $S^-$.

From now on, we therefore work on an Inoue-Bombieri surface of type $S^+$. Our arguments begin by following the same outline as in the case of $S_M$, by looking at $S^+$ as a nilmanifold-bundle over $\mathbb{S}^1$, and then performing \dan{partial} Fourier series expansion on the nilmanifold which is a $\mathbb{T}^2$-bundle over $\mathbb{S}^1$. The proof of solvability of the ODEs that we will obtain will however be different from the case of $S_M$.
\dan{
For Fourier series on Heisenberg-type nilmanifolds, see {\itshape e.g.} \cite{auslander-tolimieri, deninger-singhof, richardson} as well as the very recent \cite{holt-zhang, holt-zhang-2,RS} which also apply this to study geometric questions.
}

Again, since the operator $\Delta_{\mathcal{D}}$ involves only derivatives in the directions of the fibers of the $X$-bundle structure $\pi:S^+\to\mathbb{S}^1$, we will first solve the PDE separately on each $\pi$-fiber, and later show that these piece together to a global solution. Since $\pi$ is a smooth fiber bundle, all fibers are diffeomorphic to the group-quotient $X_1=H(3;\R)/\Gamma'',$ the fiber over $y_2=1$, but now the operator $\Delta_{\mathcal D}$ actually depends on $y_2$. The fiber $X_{y_2}$ is equal to the quotient $X_{y_2} = \mathbb R^3 / \Gamma''$  in \eqref{quot} with fixed value of $y_2$, where $\Gamma'' = \langle (a_1, b_1, c_1), (a_2, b_2, c_2), (0,0, c_3)\rangle$ acts as the ``$y_2$-rescaled product''
$$ (a,b,c) \cdot (x_2,y_1,x_1) := (x_2+a, y_1+y_2 b, x_1+bx_2+c) . $$
Recall that, by Remark \ref{irraz}, neither $(a_1, a_2)$ nor $(b_1, b_2)$ has rational algebraic slope.

An arbitrary function $f$ on $X_{y_2}$ is identified with a function $f(x_2,y_1,x_1)$ on $\mathbb{R}^3$ which satisfies the periodicity conditions
\begin{eqnarray*}
f(x_2,y_1,x_1) &=& f(x_2+a_1,y_1+y_2 b_1,x_1+b_1x_2+c_1) \\
&=& f(x_2+a_2,y_1+y_2b_2,x_1+b_2x_2+c_2)\\
&=&f(x_2,y_1,x_1+c_3).
\end{eqnarray*}

{\rosso{
By taking advantage of the periodicity in the variable $x_1$, we take the Fourier series expansion of $f$ in the last variable:
$$ f(x_2,y_1,x_1) = \sum_{k\in\mathbb Z} f_k(x_2,y_1) \exp \left( 2 \pi \sqrt{-1} \frac{x_1}{c_3}k \right) , $$
where the coefficients $f_k$ are complex-valued functions such that
\begin{equation}\label{eq:reality}
f_k = \overline{f_{-k}} ,
\end{equation}
and satisfying the further periodicity conditions
\begin{eqnarray}\label{eq:periodicity-fourier}
f_k(x_2,y_1) &=& f_k(x_2+a_1, y_1+y_2b_1) \exp \left( 2\pi\sqrt{-1} \left( \frac{b_1}{c_3}x_2+\frac{c_1}{c_3} \right) k \right) \\
&=& f_k(x_2+a_2, y_1+y_2b_2) \exp \left( 2\pi\sqrt{-1} \left( \frac{b_2}{c_3}x_2+\frac{c_2}{c_3} \right) k \right) . \nonumber
\end{eqnarray}
In particular, for $k=0$, these rewrite as
\begin{eqnarray*}
f_0(x_2,y_1) &=& f_0(x_2+a_1, y_1+y_2b_1) \\
&=& f_0(x_2+a_2, y_1+y_2b_2) ,
\end{eqnarray*}
that is, $f_0$ is periodic with respect to the lattice
$$ \mathbb Z \left(\begin{matrix}a_1\\y_2b_1\end{matrix}\right) \oplus \mathbb Z \left(\begin{matrix}a_2\\y_2b_2\end{matrix}\right) , $$
and we can further expand  
$$ f_0(x_2,y_1) = \sum_{m,n\in\mathbb Z} f_{0,m,n} \exp \left( \frac{2\pi\sqrt{-1}}{y_2(a_1b_2-a_2b_1)} \left( y_2b_2x_2-a_2y_1 \right)m+ \frac{2\pi\sqrt{-1}}{y_2(a_1b_2-a_2b_1)} \left( -y_2b_1x_2+a_1y_1 \right) n \right) , $$
where $f_{0,m,n}\in\mathbb C$ are such that $f_{0,m,n}=\overline{f_{0,-m, -n}}$.
}}

{\rosso{
For simplicity of notation, we again denote the pullback of our function $32 G(\omega)$ to $\mathbb R^3$ by $g$, which can then be expanded as above
$$ g(x_2,y_1,x_1) = \sum_{k\in\mathbb Z} g_k(x_2,y_1) \exp \left( 2 \pi \sqrt{-1} \frac{x_1}{c_3}k \right) , $$
where the coefficients $g_k$ satisfy \eqref{eq:reality} and the periodicity conditions \eqref{eq:periodicity-fourier}, in particular,
$$ g_0(x_2,y_1) = \sum_{m,n\in\mathbb Z} g_{0,m,n} \exp \left( \frac{2\pi\sqrt{-1}}{y_2(a_1b_2-a_2b_1)} \left( y_2b_2x_2-a_2y_1 \right)m+ \frac{2\pi\sqrt{-1}}{y_2(a_1b_2-a_2b_1)} \left( -y_2b_1x_2+a_1y_1 \right) n \right) , $$
where crucially the zero mode $g_{0,0,0}$ vanishes, thanks to \eqref{necessa} (which is equivalent to the obstruction \eqref{necess} by Lemma \ref{lem:obstr2}).
}}

{\rosso{
The equation \eqref{eq:Deltau=G} $32\,\Delta_{\mathcal{D}}u=g$ reduces to the system
\begin{equation}\label{eq:ode-k}
\left(\frac{\partial}{\partial y_1}\right)^2 u_k(x_2,y_1) - 4\pi^2\frac{k^2}{c_3^2} u_k(x_2,y_1) = g_k(x_2,y_1) , \qquad \text{ for } k \in \mathbb Z,
\end{equation}
which are linear ordinary differential equations of second order in $y_1$.
}}

{\rosso{
Consider first the case $k \in \mathbb Z \setminus \{0\}$. It is clear that $g_k$ is smooth and bounded on $\mathbb R^2$, indeed it satisfies the periodicity conditions \eqref{eq:periodicity-fourier}. Then, for any fixed $x_2 \in \mathbb R$, we get a unique solution $u_k(x_2,\_)$ for \eqref{eq:ode-k} which is smooth in $(x_2,y_1)\in \mathbb R^2$ and bounded in $y_1$, see \cite[Proposition 8.2]{coppel}.
To apply the latter cited Proposition, one requires that the homogeneous equation
$$
\left(\begin{matrix} u \\ v \end{matrix}\right)'
= \left(\begin{matrix} 0 & 1 \\ \left(2\pi\cdot\sfrac{k}{c_3}\right)^2 & 0 \end{matrix}\right)
\cdot \left(\begin{matrix} u \\ v \end{matrix}\right) $$
has {\em bounded growth} and {\em exponential dichotomy} on the real line, as defined in \cite[pages 8 and 10]{coppel}. The bounded growth condition is readily verified, and since the above equation is autonomous, exponential dichotomy amounts to checking that the matrix has one eigenvalue with strictly positive real part and the other with strictly negative real part, see \cite[pages 10 and 19]{coppel}. This is indeed true, the eigenvalues being $\pm2\pi\cdot\sfrac{k}{c_3}\in\mathbb R\setminus\{0\}$.

The coefficients of the equation being real, then $u_k$ clearly satisfies the reality conditions \eqref{eq:reality}.
Moreover, since
\begin{eqnarray*}
\lefteqn{
\left(\frac{\partial}{\partial y_1}\right)^2 \left( u_k(x_2+a_j,y_1+y_2b_j)\exp \left( 2\pi\sqrt{-1} \left( \frac{b_j}{c_3}x_2+\frac{c_j}{c_3} \right) k \right) \right) }\\
&& - 4\pi^2\frac{k^2}{c_3^2} \left( u_k(x_2+a_j,y_1+y_2b_j) \exp \left( 2\pi\sqrt{-1} \left( \frac{b_j}{c_3}x_2+\frac{c_j}{c_3} \right) k \right) \right) \\
&=&
g_k(x_2+a_j,y_1+y_2b_j) \exp \left( 2\pi\sqrt{-1} \left( \frac{b_j}{c_3}x_2+\frac{c_j}{c_3} \right) k \right) = g(x_2,y_1) ,
\end{eqnarray*}
for $j\in\{1,2\}$, and by uniqueness of the bounded solution of \eqref{eq:ode-k}, we get that the solution $u_k$ satisfies the periodicity conditions \eqref{eq:periodicity-fourier}.
}}

{\rosso{
Look at $\Re u_k$, and let $y_1^{\max}$ be a maximum point for $\Re u_k(x_2,\_)$, once fixed $x_2\in\mathbb R$. Then
\begin{eqnarray*}
\max_{y_1\in\mathbb R} \Re u_k(x_2,y_1) &=& \Re u_k(x_2, y_1^{\max}) \\
&=&
\frac{1}{4\pi^2} \frac{c_3^2}{k^2} \left( \left(\frac{\partial}{\partial y_1}\right)^2 \Re u_k(x_2, y_1^{\max}) - \Re g_k(x_2,y_1^{\max}) \right) \\
&\leq& - \frac{1}{4\pi^2} \frac{c_3^2}{k^2} \Re g_k(x_2,y_1^{\max}) \leq \frac{1}{4\pi^2} \frac{c_3^2}{k^2} \max |g_k| .
\end{eqnarray*}
The same argument applies for $\Im u_k$, as well as at the minima points. This yields
\begin{equation}\label{eq:bound-k>0}
\max |u_k| \leq \frac{\sqrt{2}}{4\pi^2} \frac{c_3^2}{k^2} \max |g_k| .
\end{equation}
}}

{\rosso{
Consider now the case $k=0$. The differential equation \eqref{eq:ode-k} reduces to a system of algebraic equations:
\begin{equation}\label{eq:ode-mn}
-4\pi^2 u_{0,m,n} \left( \frac{n a_1 - m a_2}{y_2(a_1b_2-a_2b_1)} \right)^2 = g_{0,m,n} , \qquad \text{ for } m,n \in \mathbb Z .
\end{equation}
Since $\sfrac{a_1}{a_2}$ is irrational by Remark \ref{irraz}, then $n a_1 - m a_2\neq0$ for any $(m,n)\in\mathbb Z^2\setminus\{(0,0)\}$. Then, for $(m,n)\neq(0,0)$, we get the formal solution
$$ u_{0,m,n} = -\frac{1}{4\pi^2 } \left( \frac{y_2(a_1b_2-a_2b_1)}{n a_1 - m a_2} \right)^2 g_{0,m,n} . $$
Since $\sfrac{a_1}{a_2}$ is algebraic irrational, say of degree $d\geq 2$, then by the Liouville theorem we have
$$ |a_1n-a_2m|=|a_2| \cdot \left| \frac{a_1}{a_2}-\frac{m}{n} \right| \cdot |n| \geq C\cdot |a_2|\cdot|n|^{2-2d} $$
where $C$ is a positive constant depending on $\sfrac{a_1}{a_2}$. We get the estimate
\begin{equation}\label{eq:bound-k=0}
|u_{0,m,n}| \leq \frac{1}{4\pi^2C^2} \frac{|y_2(a_1b_2-a_2b_1)|^2}{|a_2|^2} \cdot |n|^{4d-4} \cdot |g_{0,m,n}| .\end{equation}
}}

{\rosso{
Since the datum $g$ is smooth, then, for $|k|\to+\infty$ and for all $N>0$, the Fourier coefficients $g_{k}$ decay faster than $|k|^{-2N}$ at any point, which implies that $\max|g_k|$ decay faster than $|k|^{-2N}$, respectively, the Fourier coefficients $g_{0,m,n}$ decay faster than $(m^2+n^2)^{-N}$. By \eqref{eq:bound-k>0} and \eqref{eq:bound-k=0}, the same decay property holds for $u_k$ and $u_{0,m,n}$. By this, and by the smoothness of the Fourier coefficients, we therefore get that
\begin{eqnarray*}
\lefteqn{ u(x_2,y_1,x_1) := \sum_{k\neq0} u_k(x_2,y_1) \exp\left(2\pi\sqrt{-1} \frac{x_1}{c_3}k\right) } \\
&& +\sum_{(m,n)\neq(0,0)} u_{0,m,n} \exp \left( \frac{2\pi\sqrt{-1}}{y_2(a_1b_2-a_2b_1)} \left( y_2b_2x_2-a_2y_1 \right)m+ \frac{2\pi\sqrt{-1}}{y_2(a_1b_2-a_2b_1)} \left( -y_2b_1x_2+a_1y_1 \right) n \right)
\end{eqnarray*}
is a smooth function on $X_{y_2}$.
(Note that the role of Liouville theorem to get smoothness here is less strong than before: compare with the ``greater regularity'' discussed in \cite[page 309]{richardson} for solutions of first order equations on the Heisenberg group with respect to the torus.)
}}

Lastly, smoothness of $u$ in the variable $y_2$ follows directly from smoothness of the Fourier coefficients of $g$ with respect to $y_2$, which implies the same smoothness for $u_{k}$ and $u_{0,m,n}$, by smooth dependence of the solution of the ODE.

Taking $y_2$ in the interval $[1,\gamma]$, we have thus obtained a smooth solution $u$ of \eqref{eq:Deltau=G2} on the subset of $\tilde{X}$ defined by $\pi_1^{-1}([1,\gamma]),$ which is diffeomorphic to $X\times[1,\gamma]$. Next we show that the solution $u$ descends to a smooth function on $S^+$, or equivalently that
$u(p,1)=u(\Psi(p),\gamma)$ for all $p\in X$, where recall that $\Psi$ is induced by $f_0(z,w)=(z+\mathbf t,\gamma \cdot w)$. Of course the function $g$ has this property. Let
$\tilde{u}(p,1)=u(\Psi(p),\gamma)$, which is a smooth function on $X\times\{1\}$ which satisfies (writing $\mathbf t=t_1+\sqrt{-1}t_2$)
\begin{eqnarray*}
\Delta_{\mathcal{D}}\tilde{u}(p,1)&=&\frac{1}{32}\,\left(\frac{\partial^2}{\partial x_1^2}+\frac{\partial^2}{\partial y_1^2}\right)\left(u(x_1+t_1,y_1+t_2,\gamma x_2)\right)\\
&=&\frac{1}{32}\,\left(\frac{\partial^2u}{\partial x_1^2}+\frac{\partial^2u}{\partial y_1^2}\right)(x_1+t_1,y_1+t_2,\gamma x_2)\\
&=&g(\Psi(p),\gamma) = g(p,1) =
\Delta_{\mathcal{D}}u(p,1),
\end{eqnarray*}
and hence $u(\cdot,1)$ and $\tilde{u}(\cdot,1)$ differ by a constant, which is in fact zero since both functions have integral zero on this fiber. Therefore $u$ descends to a smooth solution of \eqref{eq:Deltau=G} on $S^+$, as desired.
\end{proof}

Combining Theorems \ref{main2} and \ref{main3}  concludes the proof of Theorem \ref{thm:main-thm}.

Lastly, we give the proof of Corollary \ref{kor}. Given Theorem \ref{thm:main-thm}, this corollary is a more or less direct application of  \cite[Theorem 1.1]{fang-tosatti-weinkove-zheng}. The only thing to remark is that this latter result assumes that $\omega+\sqrt{-1}\,\partial\overline\partial u$ is a {\em Hermitian metric} which is strongly flat along the leaves, but a close inspection of its proof shows that all that is needed is that $\omega+\sqrt{-1}\,\partial\overline\partial u$ is a $(1,1)$-form which is strongly flat along the leaves (so it is positive definite in the leaves directions, but it need not be positive definite in all directions). Indeed, if we let
$$\tilde{\omega}(t)=(1-e^{-t})\omega_\infty+e^{-t}(\omega+\sqrt{-1}\,\partial\overline\partial u),$$
(as in \cite[Equation (2.7)]{fang-tosatti-weinkove-zheng}), then it is easy to see that these are Hermitian metrics on $S$ for all $t$ sufficiently large (using the Cauchy-Schwarz inequality to bound the terms involving $dz\wedge d\overline{w}$ and its conjugate). While this is not exactly the same as in \cite{fang-tosatti-weinkove-zheng}, where $\tilde{\omega}(t)$ are Hermitian metrics for all $t\geq 0$, this is irrelevant since we are only interested in the behavior of $\omega(t)$ for $t$ large. Therefore, with this change, the rest of the arguments in \cite{fang-tosatti-weinkove-zheng} go through verbatim, and this proves Corollary \ref{kor}.

\section{Higher order regularity}\label{sec:higher}
In this section we give the proof of Theorem \ref{thm:main-thm2}. In this section, $S$ will denote either an Inoue-Bombieri surface or a non-K\"ahler minimal properly elliptic surface (we refer to \cite[\S 8]{tosatti-weinkove-comp} and \cite{tosatti-weinkove-mathann} for the background material on these). On each of these surfaces there is an explicit Gauduchon metric discovered by Tricerri \cite{tricerri} or Vaisman \cite{vaisman}, that we will denote by $\omega_{TV}$ (for Inoue surfaces, these metrics were discussed earlier).

As in the statement of Theorem \ref{thm:main-thm2} we assume that the initial metric $\omega$ of the normalized Chern-Ricci flow \eqref{eq:CRF} is of the form $\omega=\omega_{TV}+\sqrt{-1}\partial\overline\partial \psi$ for some smooth function $\psi$ (in particular it is Gauduchon), and our goal is to prove uniform {\em a priori} estimates for $\omega(t)$ in $C^k(\omega)$ for all $k\geq 0$, and also to show that the curvature of $\omega(t)$ remains uniformly bounded for all $t\geq 0$.  As explained in the Introduction, we will adapt an idea first introduced in \cite{GTZ}, following the discussion in \cite[\S 5.14]{To} for the K\"ahler-Ricci flow, which dealt with the case of torus fibrations. Here, the role of the torus curves fibers will be played by the leaves of the canonical foliation on Inoue-Bombieri surfaces.

Let us first discuss the case when $S$ is of type $S_M$. Let $p:\mathbb{C}\times\mathbb{H}\to S$ be the universal covering map, with standard coordinates $(z,w)$ on $\mathbb{C}\times\mathbb{H}$, and let
$$\alpha=\frac{\sqrt{-1}}{4y_2^2}dw\wedge d\overline{w},\quad \beta=\sqrt{-1} y_2dz\wedge d\overline{z}.$$
The Tricerri metric is then given by $\omega_{TV}=4\alpha+\beta.$

Thanks to \cite[Theorem 2.4]{fang-tosatti-weinkove-zheng} (which we extended in Theorem \ref{thm:main-thm} to arbitrary initial Gauduchon metrics) we have
\begin{equation}\label{equiv}
C^{-1}(\alpha+e^{-t}\beta)\leq\omega(t)\leq C(\alpha+e^{-t}\beta),
\end{equation}
on $S\times[0,\infty)$. We also have that
$$\omega(t)=\omega_{\rm ref}(t)+\sqrt{-1}\partial\overline\partial\varphi(t),\quad \omega_{\rm ref}(t)=e^{-t}\omega_{TV}+(1-e^{-t})\alpha=(1+3e^{-t})\alpha+e^{-t}\beta,$$
for some smooth functions $\varphi(t)$ on $S$.

For $t\geq 0$ let
$\lambda_t:\mathbb{C}\times\mathbb{H}\to \mathbb{C}\times\mathbb{H}$ be given by
$$\lambda_t(z,w)=(ze^{t/2},w),$$
which is a ``stretching in the leaf directions'', analogous to the stretching first used in \cite{GTZ} (with the stretching done along abelian varieties which were the fibers of a fibration).
Observe that on $\mathbb{C}\times\mathbb{H}$, for all $t\geq 0$ we have
\begin{equation}\label{stretch}
\lambda_t^*p^*\alpha=p^*\alpha,\quad \lambda_t^*p^*\beta=e^tp^*\beta.
\end{equation}
These relations will give us the crucial property that the stretched reference metrics along the flow are smoothly comparable to Euclidean, as was the case for the semi-flat metrics that had to be carefully constructed in \cite{GTZ,HT}.

Then, for each $t$, fixed the metrics
$$\omega_t(s):=\lambda_t^*p^*\omega(s+t),\quad -t\leq s\leq 0,$$
on $\mathbb{C}\times\mathbb{H}$ satisfy
$$C^{-1}(\lambda_t^*p^*\alpha+e^{-s-t}\lambda_t^*p^*\beta)\leq \omega_t(s)\leq C(\lambda_t^*p^*\alpha+e^{-s-t}\lambda_t^*p^*\beta),$$
thanks to \eqref{equiv}. If we then restrict to $-1\leq s\leq 0$ then this implies
\begin{equation}
C^{-1}(\lambda_t^*p^*\alpha+e^{-t}\lambda_t^*p^*\beta)\leq \omega_t(s)\leq C(\lambda_t^*p^*\alpha+e^{-t}\lambda_t^*p^*\beta),
\end{equation}
and using \eqref{stretch} this implies
\begin{equation}\label{equi}
C^{-1}p^*\omega_{TV}\leq \omega_t(s)\leq Cp^*\omega_{TV},
\end{equation}
for all $t\geq 0, -1\leq s\leq 0$, and observe that $p^*\omega_{TV}$ is locally uniformly equivalent to a fixed Euclidean metric $\omega_E$ on $\mathbb{C}\times \mathbb{H}$. Furthermore, the metric $\omega_t(s)$ satisfy
\begin{equation}\label{scaled}
\frac{\partial}{\partial s}\omega_t(s)=-\mathrm{Ric}^{Ch}(\omega_t(s))-\omega_t(s), \quad -1\leq s\leq 0,
\end{equation}
and they are of the form
$$\omega_t(s)=\lambda_t^*p^*\omega_{\rm ref}(s+t)+\sqrt{-1}\partial\overline\partial(\lambda_t^*p^*\varphi(s+t)),$$
where
$$\lambda_t^*p^*\omega_{\rm ref}(s+t)=(1+3e^{-s-t})p^*\alpha+e^{-s}p^*\beta,$$
are Hermitian metrics which are uniformly smoothly bounded with respect to $\omega_E$, independent of $t\geq 0$ and $-1\leq s\leq 0$. We can thus apply the local higher order estimates of \cite{SW} and obtain that for each given compact set $K\Subset\mathbb{C}\times\mathbb{H}$ and $k\geq 0$ there are constants $C_{K,k}$ such that
$$\|\omega_t(s)\|_{C^k(K,g_E)}\leq C_{K,k},$$
for all $t\geq 0, -\frac{1}{2}\leq s\leq 0$. Setting $s=0$ we obtain
\begin{equation}\label{cinfty2}
\|\lambda_t^*p^*\omega(t)\|_{C^k(K,g_E)}\leq C_{K,k},
\end{equation}
and we still have
\begin{equation}\label{cinfty3}
\lambda_t^*p^*\omega(t)\geq C^{-1}\omega_E,
\end{equation}
on $K\times[0,\infty)$, from \eqref{equi} with $s=0$. From \eqref{cinfty2} and \eqref{cinfty3} we immediately see that
\begin{equation}\label{curvv2}
\sup_K |\mathrm{Rm}(\lambda_t^*p^*\omega(t))|_{\lambda_t^*p^*\omega(t)}\leq C,
\end{equation}
for all $t\geq 0$. If now $U\subset\mathbb{C}\times\mathbb{H}$ is the interior of a fundamental domain for the $\Gamma$-action on $\mathbb{C}\times\mathbb{H}$ (where $S=(\mathbb{C}\times\mathbb{H})/\Gamma$), then we have that $p$ is a biholomorphism between $U$ and an open dense subset of $S$ and so
$$\sup_{S}|\mathrm{Rm}(\omega(t))|_{\omega(t)}=\sup_U |\mathrm{Rm}(p^*\omega(t))|_{p^*\omega(t)}=\sup_{\lambda_{1/t}(U)} |\mathrm{Rm}(\lambda_t^*p^*\omega(t))|_{\lambda_t^*p^*\omega(t)},$$
where $\lambda_{1/t}$ is the inverse map of $\lambda_t$. But the compact sets $\lambda_{1/t}(U)$ are all contained in a fixed compact set $K\Subset\mathbb{C}\times\mathbb{H}$, and so
from \eqref{curvv2} we obtain that the curvature of $\omega(t)$ remains uniformly bounded for all times.

Next, it is easy to check working in the coordinates $(z,w)$ that \eqref{cinfty2} implies that
$$\|p^*\omega(t)\|_{C^k(K,g_E)}\leq C_{K,k},$$
and in fact \eqref{cinfty2} is a much stronger bound. Therefore taking $K$ again to be the closure of a fundamental domain, this estimate implies
$$\|\omega(t)\|_{C^k(S,\omega)}\leq C_k,$$
as desired.

We now discuss the case of $S^+$. The universal cover map is again denoted by $p:\mathbb{C}\times\mathbb{H}\to S$ with standard coordinates $(z,w)$ on $\mathbb{C}\times\mathbb{H}$, and let
$$\alpha=\frac{\sqrt{-1}}{4y_2^2}dw\wedge d\overline{w},\quad \beta=\sqrt{-1} \left(dz-\frac{y_1-m\log y_2}{y_2}dw\right)\wedge \left(d\overline{z}-\frac{y_1-m\log y_2}{y_2}d\overline{w}\right).$$
The Vaisman-Tricerri metric is then given by $\omega_{TV}=4\alpha+\beta.$ Defining the stretch maps $\lambda_t$ as before, we now have
$$\lambda_t^*p^*\alpha=p^*\alpha,\quad \lambda_t^*p^*\beta=e^t\sqrt{-1} \left(dz-\frac{y_1-e^{-\frac{t}{2}}m\log y_2}{y_2}dw\right)\wedge \left(d\overline{z}-\frac{y_1-e^{-\frac{t}{2}}m\log y_2}{y_2}d\overline{w}\right),$$
and so if we define the reference metrics
$\omega_{\rm ref}(t)=e^{-t}\omega_{TV}+(1-e^{-t})\alpha=(1+3e^{-t})\alpha+e^{-t}\beta,$ then again
$$\lambda_t^*p^*\omega_{\rm ref}(s+t)=(1+3e^{-s-t})p^*\alpha+e^{-s}p^*\beta,$$
are Hermitian metrics which are uniformly smoothly bounded with respect to $\omega_E$, independent of $t\geq 0$ and $-1\leq s\leq 0$, and the argument proceeds exactly as earlier. The case when $S$ is of type $S^-$ is easily reduced to the case of $S^+$ by passing to a double cover (cf. \cite[\S 7]{tosatti-weinkove-comp} and \cite{fang-tosatti-weinkove-zheng}).

Lastly, let $S$ be a minimal non-K\"ahler properly elliptic surface. Then its universal cover of $S$ is again $\mathbb{C}\times\mathbb{H}$ but it is more convenient to work instead with $\mathbb{C}^*\times\mathbb{H}$ via the map $(z,w)\mapsto (e^{-z/2},w)$. Then we have a holomorphic covering $p:\mathbb{C}^*\times\mathbb{H}\to S$, and if we denote by $u=e^{-z/2}$ the coordinate on $\mathbb{C}^*$ then the stretching maps $\lambda_t$ as earlier become
$\lambda_t:\mathbb{C}^*\times\mathbb{H}\to \mathbb{C}^*\times\mathbb{H}, \lambda_t(u,w)=(u^{e^{t/2}},w)$. Let
$$\alpha=\frac{\sqrt{-1}}{4y_2^2}dw\wedge d\overline{w},\quad \beta=\sqrt{-1} \left(-\frac{2du}{u}+\frac{idw}{y_2}\right)\wedge \left(-\frac{2d\overline{u}}{\overline{u}}-\frac{id\overline{w}}{y_2}\right).$$
The Vaisman metric is then given by $\omega_{TV}=4\alpha+\beta.$ We now have
$$\lambda_t^*p^*\alpha=p^*\alpha,\quad \lambda_t^*p^*\beta=e^t\sqrt{-1}  \left(-\frac{2du}{u}+e^{-\frac{t}{2}}\frac{idw}{y_2}\right)\wedge \left(-\frac{2d\overline{u}}{\overline{u}}-e^{-\frac{t}{2}}\frac{id\overline{w}}{y_2}\right),$$
and so if we define the reference metrics
$\omega_{\rm ref}(t)=e^{-t}\omega_{TV}+(1-e^{-t})\alpha=(1+3e^{-t})\alpha+e^{-t}\beta,$ then again
$$\lambda_t^*p^*\omega_{\rm ref}(s+t)=(1+3e^{-s-t})p^*\alpha+e^{-s}\sqrt{-1}  \left(-\frac{2du}{u}+e^{-\frac{t}{2}}\frac{idw}{y_2}\right)\wedge \left(-\frac{2d\overline{u}}{\overline{u}}-e^{-\frac{t}{2}}\frac{id\overline{w}}{y_2}\right),$$
are Hermitian metrics which are uniformly smoothly bounded with respect to $\omega_E$, independent of $t\geq 0$ and $-1\leq s\leq 0$, and the argument proceeds exactly as earlier.

\begin{rmk}
Instead of the local higher order estimates of \cite{SW}, we could have also used those in \cite{Chu} (which are the parabolic version of the results of \cite{TWWY}). For this, one writes the Chern-Ricci flow equation \eqref{scaled} as a scalar parabolic complex Monge-Amp\`ere equation on $\mathbb{C}\times\mathbb{H}$ for the potential $\lambda_t^*p^*\varphi(s+t)$, which is uniformly bounded in $L^\infty$ by the results in \cite{fang-tosatti-weinkove-zheng}. Nevertheless, in either of the two approaches it is important that the metrics $\lambda_t^*p^*\omega_{\rm ref}(s+t)$ are uniformly locally smoothly bounded for $t\geq 0, -1\leq s\leq 0$, and this is not the case in general if the initial metric $\omega$ is not assumed to be in the $\partial\overline\partial$-class of $\omega_{TV}$, even if it is assumed to be strongly flat along the leaves.
\end{rmk}

\begin{rmk}
In the case of minimal non-K\"ahler properly elliptic surfaces discussed above, it is also shown in \cite{tosatti-weinkove-mathann} that for every smooth fiber $E_w$, we have that $e^t\omega(t)|_{E_w}$ converges in the $C^1$ topology to the unique flat metric on $E_w$ cohomologous to $\omega_0|_{E_w}$. This convergence can be improved to $C^\infty$ by adapting an argument from \cite{TZ} as follows. Up to passing to a finite cover, we may assume that $S$ is an elliptic bundle, and pick a local trivialization $E\times B$ where $B$ is a ball in $\mathbb{C}$ and $E$ is an elliptic curve. Define stretching maps $\mu_t:E\times B\to E\times B$ by
$$\mu_t(z,w)=(z,we^{-t/2}),$$
and on $E\times B$ define
$$\omega_t(s)=e^t\mu_t^*\omega(se^{-t}+t), \quad -1\leq s\leq 0,$$
which are Hermitian metrics that solve
\begin{equation}
\frac{\partial}{\partial s}\omega_t(s)=-\mathrm{Ric}^{Ch}(\omega_t(s))-e^{-t}\omega_t(s), \quad -1\leq s\leq 0,
\end{equation}
for all $t\geq 0$, and they are of the form
$$\omega_t(s)=e^t\mu_t^*\omega_{\rm ref}(se^{-t}+t)+\sqrt{-1}\partial\overline\partial(e^t\mu_t^*\varphi(se^{-t}+t)),$$
where $\omega_{\rm ref}(t)=e^{-t}\omega_0+(1-e^{-t})\omega_\infty$, and $\omega_\infty$ is K\"ahler-Einstein, and by \cite[Thm. 5.1]{tosatti-weinkove-mathann} we have
\begin{equation}
C^{-1}e^t\mu_t^*\omega_{\rm ref}(se^{-t}+t)\leq \omega_t(s)\leq Ce^t\mu_t^*\omega_{\rm ref}(se^{-t}+t),\quad -1\leq s\leq 0, t\geq 0.
\end{equation}
One then checks as in \cite[Pf. of Thm. 1.1]{TZ} that $e^t\mu_t^*\omega_{\rm ref}(se^{-t}+t)$ are uniformly comparable to a fixed metric and smoothly uniformly bounded, for $-1\leq s\leq 0$ and $t\geq 0$, and hence the local higher order estimates of \cite{SW} give us uniform higher order estimates for $\omega_t(s), -\frac{1}{2}\leq s\leq 0$, on $E\times B$ (up to shrinking $B$ slightly). Setting $s=0$ this gives higher order estimates for $e^t\mu_t^*\omega(t)$, and restricting to any fiber $E\times \{w\}$ the maps $\mu_t$ are the identity, and we are done.
\end{rmk}

\end{document}